\newtheorem{thm}{Theorem}[section]
\newtheorem{lem}[thm]{Lemma}
\newtheorem{cor}[thm]{Corollary}
\newtheorem{rem}[thm]{Remark}
\newtheorem{defi}[thm]{Definition}
\newcommand{\ba}{\begin{array}}
\newcommand{\ea}{\end{array}}
\def \qed{\cqfd}
\newcommand*{\QEDB}{\hfill\ensuremath{\square}}
\def\qed{\vbox{\hrule
\hbox{\vrule\hbox to 5pt{\vbox to 8pt{\vfil}\hfil}\vrule}\hrule}}
\newcommand{\beg}{\begin{equation*}}
\newcommand{\begn}{\begin{equation}}
\newcommand{\en}{\end{equation*}}
\newcommand{\enn}{\end{equation}}
\begin{document}
\title[Ricci flow starting from an embedded closed convex surface in $\mathbb{R}^3$]{Ricci flow starting from an embedded closed convex surface in $\mathbb{R}^3$}
\keywords{Ricci flow, embedded closed convex surface}
\author{Jiuzhou Huang and Jiawei Liu}
\address{Jiuzhou Huang\\ Department of Mathematics and Statistics\\ McGill University\\ Montreal\\ Quebec\\ H3A 0B9\\ Canada.} \email{jiuzhou.huang@mail.mcgill.ca}
\address{Jiawei Liu\\ Department of Mathematics and Statistics\\ McGill University\\ Montreal\\ Quebec\\ H3A 0B9\\ Canada.}
%\address{Current address: Institut f\"ur Analysis und Numerik\\ Otto-von-Guericke-Universit\"at Magdeburg\\ Universit\"atsplatz 2\\ Magdeburg 39106\\ Germany.}
 \email{jiawei.liu@ovgu.de}
\thanks{AMS Mathematics Subject Classification. }
\thanks{}

\maketitle

\begin{abstract}
In this paper, we establish the existence and uniqueness of Ricci flow that admits an embedded closed convex surface in $\mathbb{R}^3$ as metric initial condition. The main point is a family of smooth Ricci flows starting from smooth convex surfaces whose metrics converge uniformly to the metric of the initial surface in intrinsic sense.
\end{abstract}

\section{Introduction}

Since Ricci flow was introduced by Hamilton \cite{Hamilton} in 1982, it has many applications in differential geometry and topology, such as the solutions of Poincar\'e conjecture \cite{Perelman1,Perelman2} and differentiable sphere theorem \cite{SchoenBrendle} etc. Another important application of Ricci flow is the smoothing of initial condition. In \cite{Simon1, Simon2}, Simon proved the existence of Ricci flow that admits a class of irregular metric spaces with dimension two or three as metric initial condition. This is an approximation of the metric space by Ricci flow. Based on Simon's work, Richard \cite{Richard1, Richard2} studied the existence and uniqueness of Ricci flow whose metric initial condition is a closed Alexandrov surface with curvature bounded from below, which gives a canonical smoothing of such surface via Ricci flow. The works of Simon and Richard are related to the Gromov-Hausdorff convergence. In this paper, we consider stronger convergence of Ricci flow for embedded closed convex surface in $\mathbb R^3$ (see Theorem \ref{Existence}).

Before stating our results, we first recall Simon \cite{Simon1, Simon2} and Richard's results \cite{Richard1,Richard2}. One key point in \cite{Simon2} is the following estimates.
\begin{thm}\label{simon}(Theorem $7.1$ in \cite{Simon2})
Let $(M,g_0)$ be a complete smooth three (or two) manifold without boundary such that
\begin{equation}\label{0416}
\begin{split}
&(a)\ Ricci(g_0)\geqslant k;\\
&(b)\ vol(^{g_0}B_1(x))\geqslant v_0>0\text{ for all }x\in M;\\
&(c)\ \sup_M|Riem(g_0)|<\infty.
\end{split}
\end{equation}	
Then there are constants $c_1=c_1(v_0, k)>0$, $c_2=c_2(v_0, k)>0$, $S=S(v_0,k)>0$ and $K=K(v_0, k)$ and a solution $(M,g(t))_{t\in[0,T)}$ to Ricci flow which satisfies $T\geqslant S$, and
\begin{equation}
\begin{split}
&(a_t)\ Ricci (g(t))\geqslant -K^2;\\
&(b_t)\ vol(^{g_t}B_1(x))\geqslant \frac{v_0}{2}>0 \text{ for all } x\in M\ and\ t\in (0,T);
\end{split}
\end{equation}	
\begin{equation}
\begin{split}
&(c_t)\ \sup_M|Riem(g(t))|\leqslant \frac{K^2}{t} \text{ for all } t\in(0,T);\\
&(d_t)\ d(p,q,s)-c_2(\sqrt{t}-\sqrt{s})\leqslant d(p,q,t)\leqslant e^{c_1(t-s)}d(p,q,s),\\
&\ \ \qquad \text{ for all } 0<s\leqslant t<T\ and\ p,\ q \in M.
\end{split}
\end{equation}	
(Note that the estimates are trivial for $t=0$.)
\end{thm}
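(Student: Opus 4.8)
The plan is to combine Shi's short-time existence theorem with a continuity (maximal time) argument. Since $\sup_M|\mathrm{Riem}(g_0)|<\infty$, Shi's theorem provides a complete smooth solution $g(t)$ to Ricci flow on a maximal interval $[0,T_{\max})$ with curvature bounded on each compact subinterval. I would let $T$ be the supremum of times $\tau\le T_{\max}$ for which $(a_t)$, $(b_t)$, $(c_t)$ hold on $[0,\tau)$ with fixed constants $K$ and $v_0/2$, and then show $T\ge S(v_0,k)$ by proving that on $[0,T)$ these bounds in fact hold with strictly better constants, so that openness forces $T$ past $S$. The point is that $(d_t)$ is not independent: its upper half follows from $(a_t)$ and its lower half from $(c_t)$, so the real content is $(a_t)$, $(b_t)$, $(c_t)$.

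First the reductions for $(d_t)$. Under $\partial_t g=-2\Ric$ the length of a fixed curve satisfies $\frac{d}{dt}L_t(\gamma)=-\int_\gamma \Ric(\dot\gamma,\dot\gamma)$; the bound $\Ric(g(t))\ge -K^2 g(t)$ from $(a_t)$ gives $\frac{d}{dt}L_t\le K^2 L_t$ and hence, minimizing over curves, $d(p,q,t)\le e^{K^2(t-s)}d(p,q,s)$, i.e. the upper half of $(d_t)$ with $c_1=K^2$. For the lower half I would invoke Hamilton's changing-distance estimate: where the distance is realized by a unique minimal geodesic, $\frac{d}{dt}d_t(p,q)\ge -C(n)\sqrt{\Lambda}$ whenever $|\mathrm{Riem}|\le\Lambda$ near the endpoints; feeding in $\Lambda=K^2/t$ from $(c_t)$ yields $\frac{d}{dt}d_t\ge -CK/\sqrt t$, and integrating produces $d(p,q,t)\ge d(p,q,s)-c_2(\sqrt t-\sqrt s)$. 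Finally $(b_t)$ follows by combining the two-sided distance control on a definite scale with a non-collapsing statement: the volume bound $(b)$ together with the curvature bound $(c_t)$ is preserved for a definite time, via a $\kappa$-non-collapsing argument or a direct Bishop--Gromov comparison using $(a_t)$.

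The lower Ricci bound $(a_t)$ I would obtain from the maximum principle applied to the curvature evolution. In dimensions two and three the full Riemann tensor is determined by the Ricci tensor, and Hamilton's tensor maximum principle, through the ODE system governing the eigenvalues of the curvature operator, shows that a lower curvature bound is preserved up to a controlled loss on a definite time interval; this degrades the initial bound $k$ into $-K^2$ with $K$ depending only on the admissible time and on $k$.

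The main obstacle is the smoothing estimate $(c_t)$, precisely because $K$ must depend only on $v_0$ and $k$ and \emph{not} on $\sup_M|\mathrm{Riem}(g_0)|$; this uniformity is exactly what makes the theorem usable for singular initial data. I would argue by contradiction and parabolic blow-up: if $(c_t)$ failed with $K(v_0,k)$ along some sequence, a point-selection lemma yields points $(x_i,t_i)$ realizing a large value of $t|\mathrm{Riem}|$ with curvature controlled on a backward parabolic neighborhood. Rescaling by $\lambda_i=|\mathrm{Riem}|(x_i,t_i)\to\infty$ so that $|\mathrm{Riem}|(x_i,t_i)=1$, and applying Hamilton--Cheeger--Gromov compactness, whose hypotheses are supplied by $(b_t)$ and the normalization, produces a complete ancient limit flow. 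The rescaling sends $\Ric\ge -K^2$ to $\Ric\ge 0$ and preserves non-collapsing, while keeping $|\mathrm{Riem}|=1$ at the base point; a rigidity argument (volume-ratio rigidity, or the classification of non-collapsed ancient solutions of nonnegative curvature in dimension $\le 3$) then forces the limit to be flat, contradicting $|\mathrm{Riem}|=1$. The delicate part is making the point-selection and compactness quantitative enough to extract explicit $S,K$ depending only on $v_0,k$; a doubling-time estimate, to the effect that curvature cannot more than double over a definite time interval, is the natural tool to localize in time and get the argument started.
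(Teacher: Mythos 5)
This theorem is quoted verbatim from Simon (Theorem 7.1 of \cite{Simon2}); the paper you are reading gives no proof of it, so your proposal can only be measured against Simon's original argument. At the level of architecture your sketch is faithful to that argument: Shi's short-time existence, a continuity-in-time scheme, the upper half of $(d_t)$ by integrating the Ricci lower bound along curves, the lower half from the Hamilton--Perelman changing-distance estimate fed with $\Lambda=K^2/t$ (whence the $\sqrt{t}-\sqrt{s}$ term), and a blow-up/compactness contradiction for the smoothing estimate $(c_t)$, whose uniformity in $v_0,k$ alone is indeed the whole point. But two steps are genuinely flawed as written. First, you present $(a_t)$ as obtainable \emph{independently} by the tensor maximum principle, ``preserved up to a controlled loss on a definite time interval'' with constants depending only on $k$. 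That is not true: the reaction term in the evolution of the curvature (or of $\mathrm{Ric}$ in dimension three) is quadratic, so the rate at which a lower Ricci bound deteriorates is controlled by $\sup_M|\mathrm{Riem}(g(t))|$ --- precisely the quantity $(c_t)$ is supposed to bound --- while conversely your blow-up proof of $(c_t)$ uses $(a_t)$ to get $\mathrm{Ric}\geqslant 0$ on the limit. Neither estimate can be established before the other; in Simon's proof $(a_t)$, $(b_t)$, $(c_t)$ are run through a single simultaneous bootstrap, and this coupling is the technical heart of the theorem, not a detail to defer. A similar caution applies to $(b_t)$: it cannot be recovered by integrating $\partial_t\, dV=-R\,dV$ against $(c_t)$, since $\int_0^t K^2\tau^{-1}\,d\tau$ diverges, so the volume bound must also ride inside the bootstrap rather than follow from the other estimates.

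Second, the rigidity step closing your blow-up argument is misstated: non-collapsed ancient solutions with nonnegative curvature are emphatically \emph{not} all flat --- the shrinking round sphere and, in dimension three, the Bryant soliton are such solutions --- so ``the classification of non-collapsed ancient solutions of nonnegative curvature'' does not force the limit to be flat and would not produce the contradiction. What does force flatness is the stronger information that the limit inherits Euclidean volume growth $\mathrm{vol}(B_r)\geqslant c\,v_0\,r^n$ at \emph{all} scales $r$ (from hypothesis $(b)$ via Bishop--Gromov, using that the rescaling factors tend to infinity), combined with Perelman's theorem that a non-flat ancient solution with bounded nonnegative curvature has zero asymptotic volume ratio; in dimension three one also needs Hamilton--Ivey pinching to upgrade $\mathrm{Ric}\geqslant 0$ to nonnegative sectional curvature on the limit. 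Your parenthetical ``volume-ratio rigidity'' points at the correct mechanism, but the fallback classification claim you actually invoke is false, and the passage from the unit-scale volume hypothesis to all-scales volume growth of the limit is exactly the step that must be made quantitative for the constants $S,K$ to depend only on $v_0$ and $k$.
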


Based on this result, the following existence and uniqueness of Ricci flow were proved in \cite{Simon2, Richard2}.
\begin{thm}\label{richard} (Theorem $1.9$ in \cite{Simon2} and Theorem $0.5$ in \cite{Richard2})
Let $(X,d)$ be a closed Alexandrov surface with curvature bounded from below by $-K$. Then there is a smooth Ricci flow $(M, g(t))_{t\in (0 , T )}$ admitting $(X,d)$ as metric initial condition in the sense that the Riemannian distances $d_{g(t)}$ uniformly converge as $t$ goes to $0$ to a distance $\tilde{d}$ on $M$ such that $(M, \tilde{d})$ is isometric to $(X, d)$. 

Moreover, if there is another smooth Ricci flow $(N, h(t))_{t\in (0 , T )}$ also admitting $(X,d)$ as metric initial condition in above sense, then there is a diffeomorphism $\varphi:M\rightarrow N$ such that $g(t)=\varphi^*h(t)$.
\end{thm}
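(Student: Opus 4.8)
The plan is to obtain existence by smoothing, applying the estimates of Theorem~\ref{simon}, and passing to a limit; uniqueness will then require a gauge-fixing and parabolic comparison argument.

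\emph{Existence.} Since a two-dimensional Alexandrov space with curvature $\ge -K$ is a topological surface, I would fix the underlying closed surface $M$ and choose a sequence of smooth Riemannian metrics $g_0^i$ on $M$ whose induced length metrics $d_{g_0^i}$ converge uniformly to $d$ and whose Gauss curvatures are bounded below by $-K-1$. In dimension two $\Ric(g_0^i)=\mathrm{Gauss}\cdot g_0^i$, so these metrics satisfy hypothesis $(a)$ of Theorem~\ref{simon} with a uniform $k$; the uniform volume lower bound $(b)$ follows because Gromov--Hausdorff convergence under a fixed lower curvature bound preserves the two-dimensional Hausdorff measure, so unit balls cannot collapse; and $(c)$ holds for each fixed smooth metric. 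Applying Theorem~\ref{simon} produces Ricci flows $(M,g^i(t))_{t\in[0,S)}$ on a common time interval $S=S(v_0,k)>0$ satisfying $(a_t)$--$(d_t)$ with constants independent of $i$.

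\emph{Passing to the limit.} The curvature bound $(c_t)$, $\sup_M|Riem(g^i(t))|\le K^2/t$, together with the non-collapsing $(b_t)$, lets me invoke Hamilton's compactness theorem on every subinterval $[\tau,S')\subset(0,S)$ and extract (after pulling back by diffeomorphisms) a smooth limiting Ricci flow $(M,g(t))_{t\in(0,S)}$. The distance estimate $(d_t)$ survives the limit, so for $0<s\le t$ one has $d_{g(s)}(p,q)-c_2(\sqrt t-\sqrt s)\le d_{g(t)}(p,q)\le e^{c_1(t-s)}d_{g(s)}(p,q)$ uniformly in $p,q$. This shows that $\{d_{g(t)}\}$ is uniformly Cauchy as $t\to0^+$ and hence converges uniformly to a distance $\tilde d$ on $M$. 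Tracking the convergence $d_{g^i(t)}\to d_{g_0^i}$ as $t\to0$ for each smooth flow, together with $d_{g_0^i}\to d$ and the diffeomorphisms from the compactness argument, a diagonal argument identifies $(M,\tilde d)$ with $(X,d)$ up to isometry.

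\emph{Uniqueness.} Given a second flow $(N,h(t))$ with the same metric initial condition, I would fix a gauge by solving the harmonic map heat flow $\partial_t\phi_t=\Delta_{g(t),h(t)}\phi_t$ between the two flows, so that $\hat g(t)=(\phi_t)_*g(t)$ solves the strictly parabolic Ricci--DeTurck flow relative to the background $h(t)$, an equation also solved by $h(t)$ itself. (In two dimensions one may instead work in the conformal gauge $g(t)=e^{2u(t)}g_0$, reducing matters to a scalar parabolic equation for the conformal factor.) Since both distance functions converge to $\tilde d$, one shows $\hat g(t)-h(t)\to0$ in $C^0$ as $t\to0$; a maximum-principle estimate for the parabolic equation satisfied by the difference, combined with the interior curvature bound $(c_t)$, then forces $\hat g(t)\equiv h(t)$ for all $t$. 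Undoing the gauge yields the diffeomorphism $\varphi:M\to N$ with $g(t)=\varphi^*h(t)$.

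The main obstacle is the uniqueness statement. Because the initial data is only a metric space and the curvature is merely bounded by $K^2/t$, classical parabolic uniqueness for matching smooth initial data does not apply directly; everything hinges on upgrading the uniform distance control $(d_t)$ into genuine $C^0$-closeness of the two gauge-fixed metrics as $t\to0$, and on controlling the harmonic map heat flow diffeomorphisms up to the singular time $t=0$.
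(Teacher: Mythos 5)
This theorem is not proved in the paper at all: it is quoted from \cite{Simon2} and \cite{Richard2}, and the authors only sketch the existence mechanism in a paragraph --- approximate $(X,d)$ in Gromov--Hausdorff distance by smooth surfaces satisfying the uniform hypotheses (\ref{0416}), run the flows given by Theorem \ref{simon}, and pass to a limit using the uniform estimates $(a_t)$--$(d_t)$. Your existence argument follows exactly this scheme, and your use of $(d_t)$ with $s=0$, with constants independent of $i$, to make the identification of the limit distance quantitative is the right point. One caveat: producing smoothings with Gauss curvature uniformly bounded below --- let alone on a fixed underlying surface with distance functions converging \emph{uniformly} to $d$ --- is itself a nontrivial approximation theorem for Alexandrov surfaces (via polyhedral approximation), not something one can simply ``choose''; in the Gromov--Hausdorff framework of \cite{Simon2, Richard2} only the weaker GH approximation is needed and available.

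The uniqueness half is where your proposal has a genuine gap, and it departs from the cited proof. Your plan hinges on the step ``since both distance functions converge to $\tilde d$, one shows $\hat g(t)-h(t)\to0$ in $C^0$.'' Uniform convergence of distance functions is far weaker than $C^0$ convergence of metric tensors, and, worse, there is no identification of $M$ with $N$ under which the tensors could even be compared: the hypothesis only provides a metric isometry between the limit spaces, whose differentiability is a priori unknown --- that regularity question is precisely the difficulty this paper isolates and resolves (for convex surfaces) in Theorem \ref{01010}. Likewise, controlling the harmonic map heat flow gauge down to the singular time $t=0$ with only $|\mathrm{Riem}|\leqslant K^2/t$ is left unaddressed, and a maximum principle forcing two Ricci--DeTurck solutions with merely $C^0$-matching rough initial data to coincide is not available off the shelf. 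The route you mention only parenthetically is the one that actually works and is the one \cite{Richard2} takes: in dimension two the flow preserves the conformal class, so $g(t)=e^{2u(t)}h$ with $h$ of constant curvature; $u(t)$ is monotone in $t$ and converges in $L^1$ to a potential $u_0$; the initial distance is the Reshetnyak metric $d_{h,u_0}$ of (\ref{01009}), which determines $u_0$; by Reshetnyak's theory \cite{RESHE} an isometry between two such metric spaces is conformal, which produces the diffeomorphism $\varphi$; and uniqueness then reduces to a statement about the scalar equation for the conformal potential (Proposition $0.6$ in \cite{Richard2}). As stated, your DeTurck strategy does not close, and you correctly flag the missing step yourself without supplying it.
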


Their ideas are as follows. They first construct a sequence of smooth manifolds $(M_i,g_i)$ that converges to $(X,d)$ in Gromov-Hausdorff distance and keeps the uniform properties in $(\ref{0416})$. For every $i$, there is a smooth Ricci flow $(M_i, g_i(t))_{t\in[0,T)}$ starting from $(M_i,g_i)$. Then by using Theorem \ref{simon} and taking the limit as $i\to\infty$, they get the Ricci flow $(M,g(t))_{t\in(0,T)}$ that converge to $(X,d)$ in Gromov-Hausdorff distance.

A natural question is when such a Ricci flow will converge to the initial metric in classical sense, and what kind of uniqueness one can claim. In this paper, when the metric initial condition $(X,d)$ is an embedded closed convex surface in $\mathbb{R}^3$, we prove that the Ricci flow will converge in intrinsic sense to $(X,d)$ as $t\to0$ and that such flows keep the isometries between their metric initial conditions. We would like to remark that the convex surface in this paper is in the sense of Alexandrov (see section 2), unless otherwise specified. 

Our first result is the following existence theorem.
\begin{thm}\label{Existence}
If $(X, d)$ is an embedded closed convex surface in $\mathbb{R}^3$, then there exists a $T>0$ and a smooth Ricci flow $(X, g(t))_{t\in(0,T)}$ such that the distance functions $d_{g(t)}$ induced by $g(t)$ converge uniformly to $d$ as $t\to0$, that is, 
\begin{equation}\label{limits}
\lim_{t\to 0} \max_{p,q\in X}|d_{g(t)}(p,q)-d(p,q)|=0.
\end{equation}
\end{thm}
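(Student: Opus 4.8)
The plan is to approximate the given convex surface $(X,d)$ by smooth convex surfaces and run the smooth Ricci flow from each approximant, then pass to the limit while controlling the intrinsic distances. Concretely, I would first realize $(X,d)$ as the boundary of a compact convex body $K\subset\mathbb{R}^3$ and approximate $K$ from outside (or inside) by a decreasing (resp. increasing) sequence of smooth strictly convex bodies $K_i$ whose boundaries $\Sigma_i=\partial K_i$ converge to $X$ in Hausdorff distance in $\mathbb{R}^3$. Since convex surfaces are nonnegatively curved in the Alexandrov sense, each smooth $\Sigma_i$ carries an induced metric $g_i$ with $\mathrm{Ric}(g_i)\geq 0$ (in dimension two this is just nonnegative Gauss curvature), and the intrinsic diameters and areas stay uniformly bounded; together with a uniform lower volume bound on unit balls this places the $(\Sigma_i,g_i)$ uniformly in the hypotheses (\ref{0416}) of Theorem \ref{simon}. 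For each $i$ there is then a smooth Ricci flow $(\Sigma_i,g_i(t))_{t\in[0,T_i)}$ with a uniform lower bound $T_i\geq S$ on the existence time and the uniform estimates $(a_t)$--$(d_t)$.

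The second step is to take the limit $i\to\infty$. Because all $(\Sigma_i,\Sigma_i)$ are diffeomorphic to the sphere $S^2$ (each is a smooth closed convex surface) and the curvature bounds $(c_t)$ give uniform control of $|Riem(g_i(t))|$ on compact time intervals $[\tau,T)$ for every $\tau>0$, I would apply Hamilton's compactness theorem for Ricci flows to extract a subsequence converging smoothly on $S^2\times(0,T)$ to a limit Ricci flow $(X,g(t))_{t\in(0,T)}$, after identifying all $\Sigma_i$ with a fixed copy of $S^2\cong X$ via diffeomorphisms. The distance estimate $(d_t)$ is the crucial tool here: it shows that for fixed $s>0$ the maps $t\mapsto d_{g_i(t)}$ are equicontinuous up to $t=0$ in a controlled way, and it transfers to the limit flow. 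This yields a smooth Ricci flow on $X$ with distances $d_{g(t)}$ that form a Cauchy family as $t\to 0$, hence converge uniformly to some limiting distance $\tilde{d}$; by construction $\tilde{d}$ is the Gromov--Hausdorff limit of the $d_{g_i(t)}$, which in turn approximate the intrinsic metric $d$ of $X$.

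The heart of the argument, and the main obstacle, is upgrading the Gromov--Hausdorff-type convergence of Theorem \ref{richard} to the genuinely \emph{uniform} convergence of distance functions on the fixed space $X$ claimed in (\ref{limits}). Two things must be reconciled: first, that the intrinsic metrics induced on the smooth approximants $\Sigma_i$ converge uniformly to the intrinsic metric $d$ of $X$ (this is where embeddedness and convexity in $\mathbb{R}^3$ are genuinely used, since extrinsic Hausdorff closeness of convex surfaces forces closeness of intrinsic distances, e.g. via the classical stability/rigidity results for convex surfaces of Pogorelov and the lower semicontinuity of length under uniform convergence); and second, that the identification of each $\Sigma_i$ with $X$ can be chosen so that $d_{g_i(0^+)}$ and $d$ are compared pointwise on the same space rather than only up to small Gromov--Hausdorff distortion. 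I expect the delicate estimate to be controlling $\lim_{t\to0}d_{g(t)}$ uniformly, combining the upper bound $d_{g(t)}(p,q)\leq e^{c_1 t}d_{g(s)}(p,q)$ and the lower bound $d_{g(s)}(p,q)-c_2\sqrt{t}\leq d_{g(t)}(p,q)$ from $(d_t)$ to trap $|d_{g(t)}(p,q)-d(p,q)|$ uniformly in $p,q$, and then letting first $i\to\infty$ and afterwards $t\to 0$ in a way that the two limits interchange. Once the uniform intrinsic convergence $d_{g_i}\to d$ is established, the remaining passage to the limit and the extraction of (\ref{limits}) should follow from the equicontinuity provided by $(d_t)$ together with a standard diagonal argument.
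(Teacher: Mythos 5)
Your proposal is correct and follows essentially the same route as the paper: approximation of the convex body by smooth convex bodies, Simon's theorem with constants uniform in $i$, a compactness argument for the flows (the paper uses Shi's estimates with Arzela--Ascoli, equivalent to your appeal to Hamilton compactness), and the trapping $d(p,q)-c_2\sqrt{t}\leqslant d_{g(t)}(p,q)\leqslant e^{c_1t}d(p,q)$ obtained by letting $s\to0$ and then $i\to\infty$ in $(d_t)$, exactly as in $(\ref{041601})$. The key upgrade from Gromov--Hausdorff to uniform convergence of distance functions, which you attribute to Pogorelov-type stability of convex surfaces, is precisely the paper's Lemma \ref{Hauss cont intrin} (a classical theorem of Alexandrov), packaged as Lemma \ref{approximation C0}.
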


The difference between Theorem \ref{richard} and Theorem \ref{Existence} is that we remove the isometry between $(M,\tilde{d})$ and $(X,d)$ in Theorem \ref{richard} when the metric initial condition $(X,d)$ is an embedded closed convex surface in $\mathbb{R}^3$.  This is due to the existence of smooth convex surfaces that approximate $(X,d)$ in Hausdorff distance (Lemma \ref{approximation C0}) instead of the Gromov-Hausdorff convergence in \cite{Richard1, Simon1, Simon2}. In fact, removing the isometry is crucial for proving the uniqueness of such Ricci flow and then study the rigidity problem of closed convex surfaces (see our project in Section \ref{further discussions}).

From now on, unless otherwise specified, by saying that $(X,g(t))_{t\in(0,T)}$ is a Ricci flow admitting an embedded closed convex surface $(X,d)$ in $\mathbb{R}^3$ as metric initial condition, we mean that it is a Ricci flow in the sense of Theorem \ref{Existence}.

Since every closed convex surface can be embedded in $\mathbb{R}^3$ as the boundary of a  convex body by using Alexandrov's embedding theorem (Theorem \ref{Alex embedding}). By Theorem \ref{Existence}, we have the following existence result.

\begin{cor} \label{Existence0}
For any closed convex surface $(X, d)$, there exists a $T>0$ and a smooth Ricci flow $g(t)$ with ${t\in(0,T)}$ admitting $(X,d)$ as metric initial condition in the sense that $(\tilde{X},g(t))_{t\in(0,T)}$ is a Ricci flow admitting $(\tilde{X},\tilde{d})$ as metric initial condition, where $(\tilde{X},\tilde{d})$ is an isometric embedding of $(X,d)$ into $\mathbb{R}^3$.
\end{cor}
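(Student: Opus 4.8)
The plan is to combine Alexandrov's embedding theorem (Theorem \ref{Alex embedding}) with the existence result already established in Theorem \ref{Existence}. Since the corollary concerns an abstract closed convex surface $(X,d)$ that need not a priori sit inside $\mathbb{R}^3$, the first step is to realize it concretely as a surface in Euclidean space, after which the previously proved existence theorem can be applied verbatim.

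First I would invoke Theorem \ref{Alex embedding} to produce an isometric embedding $\iota:(X,d)\to\mathbb{R}^3$ whose image $\tilde{X}=\iota(X)$ is the boundary of a convex body. Writing $\tilde{d}$ for the intrinsic distance induced on $\tilde{X}$, the map $\iota:(X,d)\to(\tilde{X},\tilde{d})$ is an isometry of metric spaces, so $(\tilde{X},\tilde{d})$ is an embedded closed convex surface in $\mathbb{R}^3$ that is isometric to $(X,d)$. This places $(\tilde{X},\tilde{d})$ exactly in the class of surfaces to which Theorem \ref{Existence} applies.

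Next I would apply Theorem \ref{Existence} directly to $(\tilde{X},\tilde{d})$. This yields a $T>0$ and a smooth Ricci flow $(\tilde{X},g(t))_{t\in(0,T)}$ whose induced distance functions $d_{g(t)}$ converge uniformly to $\tilde{d}$ as $t\to0$, i.e.\ $(\tilde{X},g(t))_{t\in(0,T)}$ admits $(\tilde{X},\tilde{d})$ as metric initial condition. By the very definition of admitting $(X,d)$ as metric initial condition recorded in the statement of Corollary \ref{Existence0}, this flow $(\tilde{X},g(t))_{t\in(0,T)}$ is precisely the Ricci flow asserted in the corollary.

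There is essentially no analytic obstacle here: the entire substantive content is carried by the two cited theorems, and the only point that must be verified is that Alexandrov's embedding indeed lands in the class of embedded closed convex surfaces in $\mathbb{R}^3$ to which Theorem \ref{Existence} applies. This is immediate from the statement of Theorem \ref{Alex embedding}, so the corollary follows at once. If anything, the one item worth stating explicitly is that the notion of ``metric initial condition'' for the intrinsic surface $(X,d)$ is \emph{defined} through its Euclidean realization $(\tilde{X},\tilde{d})$, so no additional compatibility argument between the two descriptions is required.
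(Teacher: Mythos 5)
Your proposal is correct and is exactly the paper's argument: the paper gives no separate proof of Corollary \ref{Existence0}, deriving it immediately from Alexandrov's embedding theorem (Theorem \ref{Alex embedding}) followed by an application of Theorem \ref{Existence} to the embedded surface $(\tilde{X},\tilde{d})$, just as you do. Your closing observation that no compatibility argument is needed because the notion of metric initial condition for the abstract surface is \emph{defined} through its Euclidean realization is also the right reading of the statement.
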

The second result in this paper is the following uniqueness theorem.
\begin{thm}\label{uni1}
Assume that $(X_1,d_1)$ and $(X_2,d_2)$ are two non-degenerate embedded closed convex surfaces in $\mathbb{R}^3$ and $f: (X_1,d_1)\rightarrow(X_2,d_2)$ is an isometry. Let $(X_1,g_1(t))_{t\in(0,T)}$ and $(X_2,g_2(t))_{t\in(0,T)}$ be Ricci flows admitting $(X_1,d_1)$ and $(X_2,d_2)$ as metric initial conditions respectively. Then $g_1(t)=f^*g_2(t)$. 
\end{thm}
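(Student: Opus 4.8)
The plan is to make the right-hand side $f^{*}g_2(t)$ meaningful by upgrading the regularity of $f$, then to reduce the assertion to a gauge-free uniqueness statement for Ricci flows on a fixed surface, and finally to establish that gauge-free uniqueness using the $C^{0}$ (Hausdorff) nature of the convergence. First I would settle the regularity of $f$. By Pogorelov's rigidity theorem, two non-degenerate closed convex surfaces in $\mathbb{R}^3$ with isometric intrinsic metrics are congruent; moreover every intrinsic self-isometry of such a surface is itself the restriction of a rigid motion (apply rigidity to the two isometric embeddings $\iota$ and $\iota\circ\sigma$ of the same abstract surface). Combining these, the given isometry $f$ is the restriction to $X_1$ of a rigid motion $F$ of $\mathbb{R}^3$ with $F(X_1)=X_2$. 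In particular $f$ is a diffeomorphism, so $f^{*}g_2(t)$ is a genuine smooth family of metrics on $X_1$; since $f$ is a diffeomorphism it is again a Ricci flow, and since $f:(X_1,d_1)\to(X_2,d_2)$ is a distance isometry while $d_{g_2(t)}\to d_2$ uniformly, the pulled-back distances converge uniformly to $d_1$. Thus $h(t):=f^{*}g_2(t)$ is a Ricci flow on $X_1$ admitting $(X_1,d_1)$ as metric initial condition, and it suffices to prove $g_1(t)=h(t)$.

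Next I would locate the gauge. By the uniqueness part of Theorem \ref{richard}, there is a diffeomorphism $\psi:X_1\to X_1$ with $g_1(t)=\psi^{*}h(t)$. Letting $t\to0$ and using $d_{g_1(t)}\to d_1$ and $d_{h(t)}\to d_1$ uniformly yields $d_1(p,q)=d_1(\psi(p),\psi(q))$, so $\psi$ is an intrinsic self-isometry of $(X_1,d_1)$ and hence, by rigidity again, the restriction of a rigid motion. The entire content of the theorem is therefore to show that this gauge can be removed, i.e.\ that the Ricci flow attached to the data is unique on the nose and not merely up to a diffeomorphism as in Theorem \ref{richard}.

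The crux is to upgrade to gauge-free uniqueness, and this is where I would exploit the stronger convergence supplied by Lemma \ref{approximation C0}: both $g_1(t)$ and $h(t)$ arise as limits of smooth Ricci flows starting from smooth convex surfaces that converge to $X_1$ in Hausdorff distance, so their intrinsic metrics converge to $d_1$ in $C^{0}$ rather than only in the Gromov--Hausdorff sense. I would convert each flow to its Ricci--DeTurck flow relative to a fixed smooth background metric on $X_1$ via the harmonic map heat flow of DeTurck's trick, turning the degenerate-parabolic geometric equation into a strictly parabolic one with the same continuous initial metric. A uniqueness theorem for the Ricci--DeTurck flow with continuous initial data---proved by a priori $C^{0}$ and energy estimates that are allowed to degenerate as $t\to0$---then forces the two DeTurck flows to coincide. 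Finally, the one-parameter families of diffeomorphisms relating Ricci flow and Ricci--DeTurck flow both converge to the identity as $t\to0$, precisely because the initial metrics agree in $C^{0}$; consequently $\psi=\mathrm{id}$ and $g_1(t)=h(t)=f^{*}g_2(t)$.

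The main obstacle is this last step. Theorem \ref{richard} only gives uniqueness up to an unknown diffeomorphism, and removing it demands a genuinely metric uniqueness statement for the flow. Establishing Ricci--DeTurck uniqueness with merely continuous initial data, and in particular controlling the harmonic map heat flow so that its gauge limits to the identity, is exactly the point at which the $C^{0}$/Hausdorff approximation by embedded smooth convex surfaces (Lemma \ref{approximation C0}) is indispensable and where the Gromov--Hausdorff framework of \cite{Richard1, Simon1, Simon2} would be insufficient.
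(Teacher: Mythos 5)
Your proposal founders at both of its load-bearing steps. The first gap is the regularity of $f$. You invoke Pogorelov's rigidity theorem to conclude that $f$ is the restriction of a rigid motion of $\mathbb{R}^3$ and then assert ``in particular $f$ is a diffeomorphism.'' That inference fails: the surfaces $X_1,X_2\subset\mathbb{R}^3$ are in general non-smooth subsets (e.g.\ boundaries of polytopes), so they carry no induced smooth submanifold structure, and the smooth structures relevant to the assertion $g_1(t)=f^*g_2(t)$ are those carried by the flows themselves, equivalently by the uniformizing metrics $h_i$ with $g_i(t)=e^{2u_i(t)}h_i$. Knowing that $f$ extends to an ambient isometry tells you only that it is an intrinsic distance isometry, hence a bi-Lipschitz homeomorphism---which was the hypothesis. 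The paper's regularity step is designed precisely to bridge this: it first identifies the initial distance exactly as $d_i=d_{h_i,u_{0,i}}$ (Theorem \ref{01012}), proves the limiting conformal potential is \emph{bounded} using non-degeneracy and the convexity/radial-function estimates (Lemmas \ref{01001} through \ref{init u bounded}), and then shows that a bi-Lipschitz isometry between metrics $e^{2u_1}h_1$ and $e^{2u_2}h_2$ with bounded potentials satisfies the Cauchy--Riemann (or anti-Cauchy--Riemann) equations at a.e.\ point and is differentiable everywhere by the Looman--Menchoff-type theorem of Arsove (Theorem \ref{01010}). Your shortcut skips exactly this analysis; it is also circular relative to the paper's stated program, whose goal is to reprove Pogorelov's rigidity \emph{via} this uniqueness theorem.

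The second gap is the gauge-free uniqueness step. You propose to prove it through a Ricci--DeTurck uniqueness theorem ``with continuous initial data,'' with the DeTurck gauges limiting to the identity ``because the initial metrics agree in $C^0$.'' Neither premise is available: the initial condition is only a distance function, the tensorial limit is $e^{2u_0}h$ with $u_0$ merely bounded (Lemma \ref{init u bounded}) and possibly discontinuous, and no $C^0$ convergence of metric \emph{tensors} is established anywhere---only uniform convergence of distance functions. Uniqueness of Ricci or Ricci--DeTurck flow from such rough data is a genuinely hard problem in general, and your sketch does not supply it. It is also unnecessary: in this two-dimensional conformal setting the needed statement already exists, namely Proposition $0.6$ of \cite{Richard2}, which says that two Ricci flows on the same smooth surface whose induced distances converge uniformly to the \emph{same} limit distance coincide identically. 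The paper concludes by citing it; once differentiability of $f$ is in hand, $f^*g_2(t)$ is a Ricci flow on $X_1$ admitting $(X_1,d_1)$ as metric initial condition, and $g_1(t)=f^*g_2(t)$ follows in one line. Your intermediate detour through the diffeomorphism $\psi$ of Theorem \ref{richard} and the attempt to force $\psi=\mathrm{id}$ is thus both unproved and superfluous.
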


Theorem \ref{uni1} gives the exact expression of the diffeomorphism in Theorem \ref{richard} when the metric initial condition is an embedded closed convex surface in $\mathbb{R}^3$. This result means that Ricci flows obtained in Theorem \ref{Existence} keep the isometries between their metric initial conditions. The point here is to prove that the isometry between the two metric initial conditions is differentiable (Theorem \ref{01010}), which implies that the pull back metrics under this isometry still satisfy Ricci flow. Then Theorem \ref{uni1} follows from Proposition $0.6$ in \cite{Richard2}.

If the metric initial conditions are closed convex surfaces, we have the following uniqueness result.

 \begin{cor}\label{uni}
 Assume that $(X_1,d_1)$ and $(X_2,d_2)$ are two isometric closed convex surfaces with non-degenerate isometric embeddings in $\mathbb{R}^3$, and that $g_1(t)$ and $g_2(t)$ with $t\in(0,T)$ are Ricci flows admitting $(X_1,d_1)$ and $(X_2,d_2)$ as metric initial conditions in the sense of Corollary \ref{Existence0}. Then $g_1(t)$ and $g_2(t)$ are isometric. 
\end{cor}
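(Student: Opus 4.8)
The plan is to reduce Corollary \ref{uni} directly to Theorem \ref{uni1} by transporting everything into $\mathbb{R}^3$ through the given isometric embeddings and then composing isometries; no new analysis is needed beyond what was already established for the embedded case.

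First I would unpack the hypothesis that $g_1(t)$ and $g_2(t)$ admit $(X_1,d_1)$ and $(X_2,d_2)$ as metric initial conditions in the sense of Corollary \ref{Existence0}. By that definition, for each $i$ there is a non-degenerate isometric embedding $\phi_i:(X_i,d_i)\to(\tilde X_i,\tilde d_i)$ of $(X_i,d_i)$ into $\mathbb{R}^3$ such that $(\tilde X_i,g_i(t))_{t\in(0,T)}$ is a Ricci flow admitting $(\tilde X_i,\tilde d_i)$ as metric initial condition in the sense of Theorem \ref{Existence}. Because the embeddings are non-degenerate, each $(\tilde X_i,\tilde d_i)$ is a non-degenerate embedded closed convex surface in $\mathbb{R}^3$, which is exactly the class of objects to which Theorem \ref{uni1} applies.

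Next I would produce a single isometry between the two embedded surfaces by composition. Writing $f:(X_1,d_1)\to(X_2,d_2)$ for the assumed isometry of the abstract surfaces, I set
\[
\tilde f := \phi_2\circ f\circ \phi_1^{-1}:(\tilde X_1,\tilde d_1)\longrightarrow(\tilde X_2,\tilde d_2).
\]
As a composition of isometries of metric spaces, $\tilde f$ is itself an isometry of metric spaces. I would then apply Theorem \ref{uni1} to the pair $(\tilde X_1,\tilde d_1)$, $(\tilde X_2,\tilde d_2)$ equipped with the isometry $\tilde f$ and the Ricci flows $g_1(t)$, $g_2(t)$ (defined on a common interval $(0,T)$): every hypothesis of that theorem is satisfied, so its conclusion gives $g_1(t)=\tilde f^{*}g_2(t)$ for all $t\in(0,T)$. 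This precisely says that $\tilde f$ is a Riemannian isometry from $(\tilde X_1,g_1(t))$ onto $(\tilde X_2,g_2(t))$ for each $t$, i.e.\ that $g_1(t)$ and $g_2(t)$ are isometric, which is the desired conclusion.

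Since the statement is a formal consequence of Theorem \ref{uni1}, I do not expect a genuine analytic obstacle. The only point requiring care is the bookkeeping: one must verify that "metric initial condition in the sense of Corollary \ref{Existence0}" indeed places us in the non-degenerate embedded setting covered by Theorem \ref{uni1}, and that composing the isometric embeddings $\phi_1,\phi_2$ with $f$ yields an honest metric isometry $\tilde f$ between the two embedded convex surfaces rather than merely a distance-preserving map on a proper subset.
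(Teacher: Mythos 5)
Your proposal is correct and follows essentially the same route as the paper: the remark immediately after Corollary \ref{uni} proves it by setting $F:=\varphi_2\circ f\circ\varphi_1^{-1}$ (your $\tilde f$) and applying Theorem \ref{uni1} to conclude $g_1(t)=F^*g_2(t)$. Your extra bookkeeping remarks (that the sense of Corollary \ref{Existence0} places the flows on the embedded non-degenerate surfaces, and that the composition is an honest metric isometry) are exactly the points the paper leaves implicit.
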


\begin{rem}
Let $\varphi_1:(X_1,d_1)\rightarrow(\tilde{X}_1, \tilde{d}_1)$ and $\varphi_2:(X_2,d_2)\rightarrow(\tilde{X}_2, \tilde{d}_2)$ be isometric embeddings of $(X_1,d_1)$ and $(X_2,d_2)$ into $\mathbb{R}^3$ respectively, and $f:(X_1,d_1)\rightarrow(X_2,d_2)$ be the isometry in Corollary \ref{uni}. From Theorem \ref{uni1}, $F:=\varphi_2\circ f\circ\varphi_1^{-1}$ is the isometry between $g_1(t)$ and $g_2(t)$ in Corollary \ref{uni}, that is, $g_1(t)=F^*g_2(t)$.
\end{rem}

The motivation of this paper is to use Ricci flow to study convex surfaces in the sense of Alexandrov. This kind of surfaces is a generalization of smooth convex surfaces to non-smooth case, and share many similarities with the latter. One example is Pogorelov's famous rigidity theorem \cite{Pogo rigit}, which generalizes Cohn-Vesson's classical rigidity result \cite{CohnV} about smooth convex surfaces to Alexandrov sense. Since there is no regularity assumption on such surfaces, Pogorelov's theorem is not only non-trivial but also difficult to access. In the future work, we hope to use the results in this paper to study Pogorelov's rigidity theorem.

The paper is organized as follows. In section \ref{Preliminaries}, we recall some basic facts about convex surfaces in the sense of Alexandrov. Then, in the third section, we prove the existence and uniqueness of Ricci flow in the sense of Theorem \ref{Existence}. Finally, we introduce our project which aims to study the rigidity of convex surfaces by using Ricci flow.

\medskip

{\bf Acknowledgements.} The authors would like to convey their gratitude to their supervisor Professor Pengfei Guan for suggesting this problem and his attentive guidance and several valuable comments on improving this paper. 
\section{Preliminaries}\label{Preliminaries}

In this section, we recall some basic results about convex surfaces in the sense of Alexandrov. These are mainly taken from \cite{Alexandrov II, Pogo extr}, see also the Appendix of \cite{Richard2}. %For simplicity, we will use " closed convex surface"  to denote closed convex surfaces in the sense of Alexandrov  .

Let $(X,d)$ be a metric space, it is called a  geodesic metric space if any two points $a$ and $b$ in $X$ can be connected by a continuous path of shortest length on $X$. Suppose $(X_1,d_1)$ and $(X_2,d_2)$ are two metric spaces, an isometry between $X_1$ and $X_2$ is a bijection $f: X_1\to X_2$ such that 
\begin{equation*}
d_2(f(a),f(b))=d_1(a,b),\ \ \ for\ all\ a,\ b\in X_1.
\end{equation*}

Let $a$, $b$ and $c$ be three different points in a geodesic metric space $(X,d)$, we define the comparison angle $\tilde{\angle} a_b^c$ as the angle at $\tilde{a}$ of the comparison triangle $\tilde{a}\tilde{b}\tilde{c}$ in $S_0$  whose sides have length $d_0(\tilde{a},\tilde{b})=d(a,b)$, $d_0(\tilde{a},\tilde{c})=d(a,c)$ and $d_0(\tilde{b},\tilde{c})=d(b,c)$, where $S_0$ is the Euclidean space, and $d_0$ is the standard distance in $S_0$.

\begin{defi}
Let $(X,d)$ be a geodesic metric space, it is said to satisfy the convexity condition if for any point $a\in X$,  and any two shortest paths $(\gamma_1(s))_{s\in[0,T]}$ and $(\gamma_2(s))_{s\in[0,T]}$ in $X$ parametrized by arc length issuing from $a$,  the comparison angle $\tilde{\angle}a_{\gamma_1(s)}^{\gamma_2(t)}$ is an non-increasing function of $s$ and $t$.
\end{defi}
\begin{defi}
Let $(X,d)$ be a geodesic metric space, it is called a closed convex surface in the sense of Alexandrov, if it is at the same time a compact topological surface without boundary, and satisfies the convexity condition.
\end{defi}

We also have the following equivalent definition.

\begin{defi}
A closed convex surface in the sense of Alexandrov is a geodesic metric space $(X,d)$ which is at the same time a compact topological surface without boundary and a metric space with non-negative curvature in the sense of Alexandrov.\end{defi}
A geodesic metric space has non-negative curvature in the sense of Alexandrov if its geodesic triangles are bigger than the geodesic triangles in $S_0$. To be more precise, a geodesic metric space $(X, d)$ has non-negative curvature in the sense of Alexandrov if and only if the following condition is satisfied:

{\it Let $a$, $b$ and $c$ be any three points in $(X,d)$, and $m$ be any point on a shortest path from $b$ to $c$. Let $\tilde{a}$, $\tilde{b}$ and $\tilde{c}$ be points in $S_0$ such that $d_0(\tilde{a},\tilde{b})=d(a,b)$, $d_0(\tilde{a},\tilde{c})=d(a,c)$ and $d_0(\tilde{b},\tilde{c})=d(b,c)$. If $\tilde{m}$ is a point on  $\tilde{b}\tilde{c}$ such that $d_0(\tilde{b},\tilde{m})=d(b,m)$. Then $d(a,m)\geqslant d_0(\tilde{a},\tilde{m})$.}

In the following, we call a closed convex surface in the sense of Alexandrov a closed convex surface if there is no confusion.
By Toponogov's theorem, every closed smooth surface  with non-negative Gauss curvature is a closed convex surface. The boundary of a convex set with the induced metric  in $\mathbb{R}^3$ is also a closed convex surface (Theorem $10.2.6$ in \cite{BBI}). Alexandrov proved the following isometric embeding theorem.
\begin{thm}\label{Alex embedding}(Page 269 in \cite{Alexandrov II}) Any closed convex surface $(X,d)$ can be isometrically embedded into $\mathbb{R}^3$ as the boundary of a (possibly degenerate) convex body. 
\end{thm}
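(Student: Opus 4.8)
The plan is to prove the theorem in two stages: first settle the polyhedral case, in which $(X,d)$ is flat away from finitely many cone points, and then obtain the general statement by approximation together with a compactness argument.

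\emph{Stage one (polyhedral realization).} Suppose first that $d$ is \emph{polyhedral}: $X$ is a topological sphere carrying a flat metric with cone singularities at finitely many points $p_1,\dots,p_N$. The non-negative Alexandrov curvature forces every cone angle to be at most $2\pi$, i.e. each singular curvature $\kappa_i = 2\pi - (\text{cone angle at } p_i)$ is non-negative, while the Gauss--Bonnet theorem forces $\sum_i \kappa_i = 4\pi$. I would realize such a metric by a convex polytope via the method of continuity. Introduce the space $\mathcal{P}_N$ of convex polytopes in $\mathbb{R}^3$ with at most $N$ labeled vertices, modulo rigid motions, and the space $\mathcal{M}_N$ of flat metrics on $S^2$ with $N$ marked cone points of positive curvature summing to $4\pi$. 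The natural map $\Phi:\mathcal{P}_N\to\mathcal{M}_N$ sends a polytope to its induced intrinsic metric, and the goal is to prove that $\Phi$ is surjective.

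For the surjectivity of $\Phi$ one checks that $\mathcal{P}_N$ and $\mathcal{M}_N$ are manifolds of the same dimension $3N-6$ and that $\mathcal{M}_N$ is connected and nonempty. The map $\Phi$ is injective by the global rigidity of convex polytopes (Cauchy's rigidity theorem in Alexandrov's form): two convex polytopes with isometric boundaries are congruent. The crux is that $\Phi$ is a local homeomorphism, which I would deduce from the \emph{infinitesimal rigidity} of convex polytopes: the differential $d\Phi$ is injective, hence an isomorphism by the dimension count, so $\Phi$ is open by the inverse function theorem and its image is open by invariance of domain. Properness of $\Phi$ (a sequence of polytopes whose metrics converge stays bounded and cannot degenerate, using the uniform positive lower bound on the curvatures $\kappa_i$) shows the image is also closed. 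Since $\mathcal{M}_N$ is connected and nonempty, $\Phi$ is onto, which gives the realization in the polyhedral case.

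\emph{Stage two (general case).} For a general closed convex surface $(X,d)$, I would approximate $d$ intrinsically by polyhedral convex metrics $d_j$ on $S^2$, obtained by inscribing finer and finer geodesic triangulations and flattening them, so that $d_j\to d$ uniformly while each $d_j$ retains non-negative curvature. By Stage one, each $(S^2,d_j)$ is the boundary of a convex body $K_j\subset\mathbb{R}^3$. Normalizing position and using that the intrinsic—hence extrinsic—diameters are uniformly bounded, the Blaschke selection theorem yields a subsequence $K_j\to K$ in Hausdorff distance, where $K$ is a convex body, possibly degenerate (a doubly covered planar convex region). It then remains to verify that the induced boundary metric of $K$ equals $\lim_j d_j = d$, i.e. that the isometric embeddings pass to the limit, which exhibits $(X,d)$ as $\partial K$ and completes the proof. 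The step I expect to be hardest is the local invertibility of $\Phi$, namely the infinitesimal rigidity of convex polytopes; this is a differential sharpening of Cauchy's combinatorial rigidity theorem and demands a careful analysis of first-order isometric deformations, ruling out any nontrivial infinitesimal bending. A secondary difficulty is controlling the limit in Stage two: ensuring that the approximating metrics can be chosen polyhedral and convex, and that convexity together with the metric convergence survives the Blaschke limit, so that $\partial K$ is genuinely isometric to $(X,d)$ rather than merely Gromov--Hausdorff close.
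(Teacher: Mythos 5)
The paper does not actually prove this statement: it is quoted, with a page citation, from Alexandrov's book, so there is no in-paper argument to compare against. What you have written is a reconstruction of Alexandrov's own proof --- polyhedral realization by the method of continuity, then approximation plus Blaschke selection, where your final step (identifying the intrinsic metric of $\partial K$ with $\lim_j d_j=d$ under Hausdorff convergence) is precisely Alexandrov's convergence theorem, recorded in this paper as Lemma~\ref{Hauss cont intrin}. The strategy is therefore the right one, but two steps as you state them have genuine gaps. The first is properness. There is no ``uniform positive lower bound on the curvatures $\kappa_i$'' on $\mathcal{M}_N$: the $\kappa_i$ of a convergent sequence of metrics can be arbitrarily small, and all you may use is that they remain positive in the limit because the limit lies in $\mathcal{M}_N$. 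More seriously, a sequence of non-degenerate polytopes realizing metrics converging in $\mathcal{M}_N$ can collapse onto a doubly covered convex polygon, and the metric of such a degenerate polyhedron still lies in $\mathcal{M}_N$; by the uniqueness theorem that metric is realized by \emph{no} non-degenerate polytope, so if $\mathcal{P}_N$ excludes degenerate polyhedra the image of $\Phi$ is not closed and the open--closed argument collapses. Alexandrov's continuity method must, and does, admit doubly covered polygons into the space of polyhedra, with a separate openness analysis near the degenerate locus.

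The second gap is your route to local invertibility. The map $\Phi$ is not differentiable across changes of the combinatorial type of the polytope (faces merging or splitting as vertices move), so the inverse function theorem cannot be invoked globally, and Dehn-type infinitesimal rigidity is a statement within a fixed triangulated combinatorial structure. The classical fix is available to you for free, since you already have injectivity from the Cauchy--Alexandrov rigidity theorem: a continuous injection between topological manifolds of the same dimension is open by Brouwer's invariance of domain, which yields openness of $\Phi$ with no smoothness of $\Phi$ whatsoever. In your write-up invariance of domain appears only as an afterthought to the inverse function theorem; it should carry the entire weight, and the infinitesimal-rigidity route (the harder, modern variant in the spirit of Volkov and Izmestiev) should be dropped or else the stratified smooth structure of $\mathcal{P}_N$ handled explicitly. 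Finally, note that the connectedness and manifold structure of $\mathcal{M}_N$, which you assert in passing, are themselves nontrivial and are proved by Alexandrov through explicit deformations of developments. With these repairs --- degenerate polyhedra included in Stage one, openness via invariance of domain, and Lemma~\ref{Hauss cont intrin} cited for the Stage-two limit identification --- your outline becomes the standard proof of the cited theorem.
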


The following lemma will be used in the proof of Theorem \ref{Existence}. Its geometric meaning is that the Hausdorff distance of two convex surfaces in $\mathbb{R}^3$ controls their intrinsic distance functions.
\begin{lem}\label{Hauss cont intrin} (Theorem $2$ in Chapter $3$ of \cite{Alexandrov II}) 
 For every closed convex surface $F$ and for every $\varepsilon>0$, there exists a $\delta>0$ such that whenever the deviation of a closed convex surface $S$ from $F$ is less than $\delta$ and the distance of some points $X$ and $Y$ on $F$ from some points $A$ and $B$ on $S$ are also less than $\delta$, we have 
\begin{equation*}
  |d_{F}(X,Y)-d_{S}(A,B)|<\varepsilon,
\end{equation*}
 where $d_{F}$ and $d_{S}$ are the distance functions on $F$ and $S$ respectively.
\end{lem}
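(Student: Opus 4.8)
The statement is Alexandrov's theorem on the continuous dependence of the \emph{intrinsic} metric of a convex surface on the surface in the Hausdorff topology, so the plan is to give a self-contained argument resting on two ingredients. Write $F=\partial K_F$ and $S=\partial K_S$ with $K_F,K_S\subset\mathbb{R}^3$ the corresponding convex bodies, and read ``deviation $<\delta$'' as $d_H(F,S)<\delta$, so that $K_F\subseteq K_S+\bar B_\delta$ and $K_S\subseteq K_F+\bar B_\delta$, where $\bar B_\delta$ is the closed $\delta$-ball and $K+\bar B_\delta$ is the (convex) outer parallel body. Since the hypotheses and the conclusion are symmetric in $F$ and $S$, it suffices to produce, for each $\varepsilon>0$, a $\delta$ for which $d_S(A,B)\le d_F(X,Y)+\varepsilon$; the reverse inequality then follows by exchanging the roles of the two surfaces.

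The first ingredient is \emph{projection monotonicity}: if $K\subseteq K'$ are convex bodies, the nearest-point projection $\pi_K:\mathbb{R}^3\to K$ is $1$-Lipschitz, it maps $\partial K'$ into $\partial K$ (a point of $\partial K'$ cannot lie in $\mathrm{int}\,K\subseteq\mathrm{int}\,K'$), and since the length of a rectifiable curve lying on a surface equals its length as a space curve, $\pi_K$ does not increase intrinsic distances:
\[
d_{\partial K}(\pi_K p,\pi_K q)\le d_{\partial K'}(p,q).
\]
Thus a shortest path may always be pushed from a larger convex surface to a smaller one without lengthening. The second ingredient is a \emph{chordal-to-intrinsic stability} estimate on a fixed convex surface: because $F$ is embedded, $K_F$ is non-degenerate, the identity from $F$ with its Euclidean metric to $F$ with its intrinsic metric is a homeomorphism of a compact space, hence there is a modulus $\omega_F$ with $\omega_F(0^+)=0$ and $d_F(P,Q)\le\omega_F(|P-Q|)$, and the same holds uniformly for all $S$ close to $F$. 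This lets us replace projected endpoints by $A$ and $B$ at a cost tending to $0$ with $\delta$.

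With these in hand the bound is obtained by sandwiching through the outer parallel body $K'=K_F+\bar B_\delta$, which contains both $K_F$ and $K_S$. Take a shortest path of $F$ from $X$ to $Y$, transport it to $\partial K'$ by the outer parallel map $x\mapsto x+\delta\nu(x)$ (whose length distortion is $1+O(\delta)$ once a lower bound on the inradius of $K_F$ is fixed), and then project it by $\pi_{K_S}$ onto $S$; projection monotonicity gives a path on $S$ from $\pi_{K_S}(X')$ to $\pi_{K_S}(Y')$ of length at most $(1+O(\delta))\,d_F(X,Y)$, so $d_S(\pi_{K_S}X',\pi_{K_S}Y')\le(1+O(\delta))\,d_F(X,Y)$. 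Each of these endpoints lies within $O(\delta)$ of the corresponding $X,Y$, hence within $O(\delta)$ of $A,B$, so the stability estimate together with the triangle inequality upgrades this to $d_S(A,B)\le d_F(X,Y)+\varepsilon$ for $\delta$ small, using that the diameter of $F$ is bounded.

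The main obstacle is that projection monotonicity runs only ``downhill,'' from a larger convex surface to a smaller, so a genuinely two-sided comparison forces the passage through the outer parallel body in each direction. The technical heart is therefore the parallel-surface distortion estimate: one must show that the intrinsic-length ratio between $F$ and $\partial(K_F+\bar B_\delta)$ is $1+O(\delta)$ \emph{uniformly}, which rests on a lower bound on the inradius of $K_F$ and degenerates when $K_F$ flattens. The companion stability estimate $d_F\le\omega_F(|\cdot|)$ is equally delicate and likewise fails for degenerate bodies, where two points on opposite sheets can be Euclidean-close yet intrinsically far. Here the embeddedness of $F$ is exactly what rescues the argument—it forces $K_F$ to be non-degenerate and, for $\delta$ small, gives the approximating bodies a uniform inradius bound—so the two uniformity statements hold; isolating precisely this point is where the real work lies, everything else being bookkeeping with the $1$-Lipschitz projection.
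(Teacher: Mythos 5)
Note first that the paper itself gives no proof of this lemma: it is quoted verbatim as Theorem~2 of Chapter~3 of Alexandrov's book, so your proposal must be measured against the cited theorem rather than against an argument in the text. Your skeleton (nonexpansiveness of the nearest-point projection, an ``uphill'' comparison through an enlarged body, and a uniform chordal-to-intrinsic modulus) is a reasonable one, and your first ingredient is correct exactly as you state it. But the uphill step has a genuine gap: the transport $x\mapsto x+\delta\nu(x)$ does not exist on a general convex surface --- and the lemma is needed in this paper precisely for non-smooth $F=\partial K$ --- since the outward normal is multivalued along ridges and at vertices; moreover, even where $\nu$ is defined, the pointwise stretch of the parallel map is $1+\delta\kappa_{\max}$, and a lower bound on the inradius gives no upper bound on curvature, so for the intended non-smooth $F$ (where effectively $\kappa_{\max}=\infty$) the claim ``length distortion $1+O(\delta)$ once a lower bound on the inradius of $K_F$ is fixed'' is not a theorem. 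The standard repair keeps your architecture but replaces the parallel body by a homothety: fix $\bar B_r(o)\subseteq K_F$ and take $o$ as origin; then $K_S\subseteq K_F+\bar B_\delta\subseteq K_F+(\delta/r)K_F=(1+\delta/r)K_F$ by convexity, and the homothety $x\mapsto(1+\delta/r)x$ dilates the \emph{intrinsic} metric exactly by the factor $1+\delta/r$, after which your projection $\pi_{K_S}$ and the $O(\delta)$ endpoint bookkeeping go through essentially verbatim.

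Second, your other ingredient --- a modulus $\omega$ with $d_S(P,Q)\le\omega(|P-Q|)$ valid \emph{uniformly} for all $S$ with $d_H(S,F)<\delta$ --- is asserted but never proved, and a compactness argument on the fixed surface $F$ does not transfer to the varying surfaces $S$. The statement is true and provable: for $\delta<r/2$ one gets $\bar B_{r-\delta}(o)\subseteq K_S\subseteq \bar B_{R+\delta}(o)$ by comparing support functions, and then the radial projection from $\partial K_S$ to a fixed sphere about $o$ is bi-Lipschitz with constants depending only on $R/r$, yielding $d_S(P,Q)\le C(R/r)\,|P-Q|$; but as submitted, the proposal defers exactly the estimate it itself identifies as ``where the real work lies,'' so it is incomplete on its own terms. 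Finally, on scope: your reading that embeddedness forces $K_F$ to be non-degenerate is legitimate for the application in the proof of Theorem~\ref{Existence} (a degenerate boundary is not an embedded closed surface), but Alexandrov's theorem --- and the paper's reliance on it through Corollary~\ref{Existence0}, where the Alexandrov embedding may be degenerate --- also covers doubly covered domains, with points of $F$ understood as points of the doubled surface and the proximity hypothesis interpreted accordingly; your projection/homothety scheme, as you concede, cannot reach that case, so even after the repairs above it establishes only the non-degenerate instance of the cited result.
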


\section{Existence and Uniqueness of Ricci flow}\label{ex and un}
In this section, assuming that $(X,d)$ is an embedded closed convex surface in $\mathbb{R}^3$, we prove the existence and uniqueness of the Ricci flow that admits $(X,d)$ as a metric initial condition.

\subsection{Existence of the Ricci flow}\label{sec existen}
In this subsection, we prove the existence of Ricci flow $(X,g(t))_{t\in(0,T)}$ that admits $(X,d)$ as metric initial condition.  First, note that, for every embedded closed convex surface $(X,d)$ in $\mathbb{R}^3$, it can be seen as the boundary of a convex body $K$. Then there is a sequence of smooth convex bodies $K_i$ that converges to $K$ in Hausdorff distance (Theorem $3.4.1$ in \cite{Schneider}). In particular, $\partial K_i$ converges to $X$ in Hausdorff distance as well. By Lemma \ref{Hauss cont intrin}, $\partial K_i$ with induced metric in $\mathbb{R}^3$ (denoted by $(X_i,\tilde{g}_i)$) converge to $(X,d)$ in intrinsic sense. More precisely, we have the following lemma.
\begin{lem} \label{approximation C0}
 Let $(X,d)$ be an embedded closed convex surface in $\mathbb{R}^3$. Then there exists a sequence of closed smooth convex surfaces $\{(X_i,\tilde{g}_i)\}_{i=1}^{\infty}$ and bijections $f_i:X\rightarrow X_i$ such that $d_i(x, y)$ converges to $d(x,y)$ uniformly for $x,\ y\in X$ as $i\to\infty$, where $d_i(x,y)=\tilde{d}_i(f_i(x),f_i(y))$, and $\tilde{d}_i$ is the distance on $X_i$ induced by $\tilde{g}_i$.
\end{lem}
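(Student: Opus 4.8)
The plan is to combine the Hausdorff-distance approximation of convex bodies with Alexandrov's comparison lemma (Lemma~\ref{Hauss cont intrin}), which is precisely the tool that upgrades Hausdorff closeness of surfaces to closeness of their intrinsic distances. First I would realize $(X,d)$ as the boundary $\partial K$ of a convex body $K\subset\mathbb{R}^3$. By Theorem $3.4.1$ in \cite{Schneider} there is a sequence of smooth convex bodies $K_i$ with $C^\infty$, strictly convex boundary such that $K_i\to K$ in Hausdorff distance; I set $X_i=\partial K_i$ and let $\tilde g_i$ be the metric induced on $X_i$ from $\mathbb{R}^3$. Each $(X_i,\tilde g_i)$ is then a closed smooth convex surface, and $X_i\to X$ in Hausdorff distance as well.

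The bijections $f_i$ I would construct by radial projection. Fix a point $O$ in the interior of $K$ (here I use that $X$ is embedded, so $K$ is non-degenerate and such an $O$ exists). For $i$ large, $O$ lies in the interior of $K_i$ too, so for each direction $u\in S^2$ the ray $O+tu$, $t\ge 0$, meets $\partial K$ and $\partial K_i$ in unique points; writing $\rho_K,\rho_{K_i}:S^2\to(0,\infty)$ for the corresponding radial functions, I define $f_i:X\to X_i$ by sending $O+\rho_K(u)u$ to $O+\rho_{K_i}(u)u$. This is a homeomorphism, hence the required bijection. The elementary fact I need is that Hausdorff convergence $K_i\to K$ together with $O$ interior forces $\rho_{K_i}\to\rho_K$ uniformly on $S^2$; consequently the displacement $\sup_{p\in X}|f_i(p)-p|=\sup_{u\in S^2}|\rho_{K_i}(u)-\rho_K(u)|$ tends to $0$.

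With these pieces in place the conclusion follows directly from Lemma~\ref{Hauss cont intrin}. Given $\varepsilon>0$, I apply that lemma with $F=X$ to obtain a $\delta>0$, and then choose $N$ so large that for all $i\ge N$ both the deviation of $X_i$ from $X$ and the displacement $\sup_{p\in X}|f_i(p)-p|$ are less than $\delta$. Then for every pair $x,y\in X$ the points $A=f_i(x)$ and $B=f_i(y)$ satisfy $|x-A|<\delta$ and $|y-B|<\delta$, so Lemma~\ref{Hauss cont intrin} yields $|d(x,y)-\tilde d_i(f_i(x),f_i(y))|<\varepsilon$. Since $N$ does not depend on $x$ or $y$, this is exactly the asserted uniform convergence $d_i\to d$.

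The substantive geometric input is entirely packaged in Lemma~\ref{Hauss cont intrin}, so the only genuine work left is the construction of the $f_i$ and the uniform displacement bound. The main point to watch is the \emph{uniform} (rather than merely pointwise) convergence of the radial functions, and the tacit non-degeneracy of $K$ needed to choose the interior basepoint $O$. If one wished to allow a degenerate (flat) limiting body, the radial projection would have to be replaced, for instance by nearest-point projections or by a direct construction of a Hausdorff-small homeomorphism between the two topological spheres; but under the embeddedness hypothesis this degenerate case does not arise.
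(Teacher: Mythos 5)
Your proposal is correct and takes essentially the same route as the paper, whose own justification of this lemma is exactly the paragraph preceding it: realize $X=\partial K$, approximate by smooth convex bodies via Theorem~3.4.1 of \cite{Schneider}, and upgrade Hausdorff closeness to intrinsic closeness via Lemma~\ref{Hauss cont intrin}; the paper leaves the bijections $f_i$ implicit, and your radial projection from an interior point, together with the uniform convergence of the radial functions, supplies that missing detail cleanly. The one caveat is that choosing the interior basepoint tacitly assumes the bounding body is non-degenerate --- a hypothesis the paper itself imposes only later (Theorem~\ref{uni1}, Lemma~\ref{01001}), while its Theorem~\ref{Alex embedding} explicitly allows degenerate bodies --- so your closing remark on how to handle the degenerate case is the appropriate safeguard rather than a superfluous one.
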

Now we prove  Theorem \ref{Existence} by using Lemma  \ref{approximation C0}.

\medskip
  
{\it Proof of Theorem \ref{Existence}.}\ \  First, for a sequence of subsets in the same metric space, Gromov-Hausdorff distance by definition is not greater than Hausdorff distance. Thus the Hausdorff convergence of $(X_i,\tilde{g}_i)$ to $(X,d)$ in Lemma $\ref{approximation C0}$ implies the Gromov-Hausdorff convergence of $(X_i,\tilde{g}_i)$ to $(X,d)$. Suppose 
\begin{equation}
diam(X,d)\leqslant D\ \ \ and\ \ \ vol(X)\geqslant\tilde{\nu}_0>0
 \end{equation} 
for some positive constants $D$ and $\tilde{\nu}_0$. Since the diameter and volume are continuous with respect to Gromov-Hausdorff convergence with sectional curvature bounded from below, and all the surfaces $(X_i,\tilde{g}_i)$ and $(X,d)$ are convex, we have  
\begin{equation}
diam(X_i,\tilde{g}_i)\leqslant 2D\ \ \ and\ \ \ ^{\tilde{g}_i}vol(X_i)\geqslant\frac{\tilde{\nu}_0}{2}>0
 \end{equation} 
for $(X_i,\tilde{g}_i)$ with large $i$. By Bishop-Gromov comparison theorem, we get
 \begin{equation}
\frac{vol(^{\tilde{g}_i}B_1(x))}{vol(B_1(x))}\geqslant\frac{vol(^{\tilde{g}_i}B_{2D}(x))}{vol(B_{2D}(x))}=\frac{^{\tilde{g}_i}vol(X_i)}{vol(B_{2D}(x))}\geqslant\frac{\tilde{\nu}_0}{2vol(B_{2D}(x))},
 \end{equation} 
  which implies that
   \begin{equation}
vol(^{\tilde{g}_i}B_1(x))\geqslant\tilde{\nu}_0\frac{vol(B_1(x))}{2vol(B_{2D}(x))}:=\nu_0>0.
 \end{equation} 
Due to Theorem \ref{simon}, we know that there are smooth Ricci flows $(X_i,g_i(t))_{t\in[0,T)}$ with $g_i(0)=\tilde{g}_i$ satisfying
\begin{equation*}
\begin{split}
&(a'_t)\ Ricci (g_i(t))\geqslant 0\ for\ all\ t\in [0,T);\\
&(b'_t)\ vol(^{g_i(t)}B_1(x))\geqslant \frac{\nu_0}{2}>0,\ for\ all\ x\in X\ and\ t\in [0,T);\\
&(c'_t)\ \sup_{X_i}|Riem(g_i(t))|\leqslant \frac{K}{t}\ for\ all\ t\in[0,T);\\
&(d'_t)\ d_{g_i(s)}(p,q)-c_2(\sqrt{t}-\sqrt{s})\leqslant d_{g_i(t)}(p,q)\leqslant e^{c_1(t-s)}d_{g_i(s)}(p,q),\\
&\ \ \ \ \ \ for\ all\ 0\leqslant s\leqslant t\in[0,T)\ and\ p,\ q\in X_i,
\end{split}
\end{equation*} 
where $K=K(\nu_0)$, $c_1=c_1(\nu_0)$, $c_2=c_2(\nu_0)$ and $T=T(\nu_0)$ are constants independent of $i$. Combining $(c'_t)$ and Shi's higher derivative estimates \cite{Shi1, Shi2} with Arzela-Ascoli theorem, there exists a subsequence (which we also denote by $g_i(t)$) converges to a metric $g(t)$ on $X$, and $(X,g(t))_{t\in(0,T)}$ is a smooth Ricci flow. Let $s\to0$ in $(d'_t)$, for all $ t\in(0,T)$ and $p,\ q\in X$, we have
\begin{equation}
\tilde{d}_{i}(f_i(p),f_i(q))-c_2\sqrt{t}\leqslant d_{g_i(t)}(f_i(p),f_i(q))\leqslant e^{c_1t}\tilde{d}_{i}(f_i(p),f_i(q)).
\end{equation} 
Since $X_i$ converges to $X$ in Hausdorff distance and $g_i(t)$ converges to $g(t)$ in local smooth sense of $(0,T)$, by letting $i\to\infty$ and using Lemma \ref{approximation C0}, we have
\begin{equation}\label{041601}
d(p,q)-c_2\sqrt{t}\leqslant d_{g(t)}(p,q)\leqslant e^{c_1t}d(p,q).
\end{equation} 
Then the uniform convergence of $d_{g(t)}$ to $d$ follows by letting $t\to0$ in $(\ref{041601})$.\QEDB

\subsection{Uniqueness}
  
In this subsection, we prove the uniqueness of Ricci flow that admits a non-degenerate embedded closed convex surface in $\mathbb{R}^3$ as a metric initial condition.  The point is to give an exact expression of the initial metric $d$ (Theorem \ref{01012}) firstly, and then to prove that the isometry between the two metric initial conditions is differentiable (Theorem \ref{01010}).

Let $(X,g(t))_{t\in(0,T)}$ be the Ricci flow obtained in subsection \ref{sec existen}. It is easy to see that its Gaussian curvature $K_{g(t)}$ is positive. In fact, the non-negativity of the Gaussian curvature $K_{\tilde{g}_i}$ of the smooth convex surfaces $(X_i,\tilde{g}_i)$ implies that the Gauss curvature $K_{g_i(t)}$ along Ricci flow $(X_i, g_i(t))_{t\in[0,T)}$ is also non-negative by applying maximum principle to the evolution equation of $K_{g_i(t)}$,
\begin{equation}
\frac{\partial}{\partial t}K_{g_i(t)}=\Delta_{g_i(t)}K_{g_i(t)}+|Ric_{g_i(t)}|^2_{g_i(t)}.
\end{equation} 
From Gauss-Bonnet theorem for $g_i(t)$ and the fact that $g_i(t)$ converges to $g(t)$ smoothly, for $t\in(0,T)$, $K_{g(t)}$ is non-negative and satifies
\begin{equation}
\int_{X}K_{g(t)}dV_{g(t)}=4\pi.
\end{equation}
Hence there must be a point $x_0$ such that $K_{g(t)}$ is positive at $x_0$. Then strong maximum principle implies that $K_{g(t)}$ is positive everywhere.

Fix $t_0\in(0,T)$ and denote $(X,g(t_0))=(X,g_{t_0})$. By the Uniformization theorem, there is a conformal equivalence (holomorphic isomorphism) $\Phi: (\mathbb{S}^2,\tilde{h})\rightarrow (X,g_{t_0})$, where  $\tilde{h}$ is a smooth metric of positive constant curvature, i.e. $\Phi^*(g_{t_0})=e^{\tilde{u}(t_0,x)}\tilde{h}(x)$ for some smooth function $\tilde{u}(t_0,x)$ on $\mathbb{S}^2$. On the other hand, the 2-dimensional Ricci flow can be written as 
\begin{equation}\label{evo g 2}
\frac{\partial}{\partial t}g(t)=-R_{g(t)}g(t).
\end{equation} 
Hence we can write $g(t)=e^{-\int_{t_0}^tR_{g(s)}ds}g_{t_0}:=\tilde{\omega}(t,x)g_{t_0}$, which implies that
\begin{equation*}
g(t)=\tilde{\omega}(t,x)(\Phi^{-1})^*(e^{\tilde{u}(t_0,x)}\tilde{h}(x))=\tilde{\omega}(t,x)e^{\tilde{u}(t_0,\Phi^{-1}(x))}\tilde{h}(\Phi^{-1}(x)):=e^{2u(t,x)}h(x).
\end{equation*}  
where $h(x):=\tilde{h}(\Phi^{-1}(x))$ and $u(t,x):=\frac{\ln(\tilde{\omega}(t,x))+\tilde{u}(t_0,\Phi^{-1}(x))}{2}$. We call $u(t):=u(t,x)$ the conformal potential along Ricci flow $(X,g(t))_{t\in(0,T)}$.  From (\ref{evo g 2}), the evolution equation of $u(t)$ reads
\begin{equation}
\frac{\partial u(t)}{\partial t}=e^{-2u(t)}(\Delta_hu(t)-K_h)=-K_{g(t)}.
\end{equation}
Since $K_{g(t)}$ is positive, $u(t)$ increases as $t$ decreases to $0$, and $u(t)\geqslant u(T)$ for $t\in(0,T]$.  It is proved in Lemma $2.2$ of \cite{Richard2} that $u(t)$ is uniformly bounded in $L^1$-sense and converges to an integrable function $u_0(x)$ in $L^1$-sense. Here, we prove that $u(t)$ is uniformly bounded for $t\in(0,T]$ in the classical sense (so $u_0(x)$ is also bounded in the classical sense).  

Before starting the proof, we remark that every Ricci flow $(X,g(t))_{t\in(0,T)}$ admitting $(X,d)$ as metric initial condition can be obtained through the process in subsection \ref{sec existen}. This is due to Proposition $0.6$ in \cite{Richard2}. Hence we only need to consider the uniqueness for the Ricci flow obtained in subsection \ref{sec existen}.
\begin{lem}\label{init u bounded}
Assume that $(X,d)$ is a non-degenerate embedded closed convex surface in $\mathbb{R}^3$. Let $(X,g(t))_{t\in(0,T]}$ be a Ricci flow admitting $(X,d)$ as metric initial condition. Then the conformal potential $u(t)$ along $(X,g(t))_{t\in(0,T]}$ is uniformly bounded. Thus, $u_0(x)$ is bounded on $X$.
\end{lem}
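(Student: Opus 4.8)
The plan is to treat the two bounds separately, since the lower bound is essentially free while the upper bound is the real content. For the lower bound I would simply exploit the monotonicity already recorded: since $K_{g(t)}>0$, the evolution $\partial_t u(t)=-K_{g(t)}$ shows that $u(t)$ is non-increasing in $t$, so $u(t)\geq u(T)\geq \min_X u(T)$ for all $t\in(0,T]$, and the right-hand side is a finite constant because $u(T)$ is smooth on the compact surface $X$. Thus $u(t)$ is uniformly bounded from below, and the difficulty is entirely in bounding $u(t)$ from above as $t\to 0$.

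A first remark is that the maximum principle alone cannot produce the upper bound: at a spatial maximum of $u(t)$ one has $\Delta_h u\leq 0$, hence $\partial_t u=e^{-2u}(\Delta_h u-K_h)\leq -e^{-2u}K_h<0$, which only yields a lower bound on $\max_X u(t)$ as $t$ decreases. So I would instead use the elliptic (Gauss curvature) equation for the conformal pair $(h,g(t))$, written as
\begin{equation*}
\Delta_h u(t)=K_h-K_{g(t)}e^{2u(t)}=:K_h-f(t),\qquad f(t)\geq 0,
\end{equation*}
and note that $\int_X f(t)\,dV_h=\int_X K_{g(t)}\,dV_{g(t)}=4\pi=\int_X K_h\,dV_h$ by Gauss--Bonnet (for $g(t)$ and for the round metric $h$). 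Hence $K_h-f(t)$ has zero mean and $u(t)$ is recovered from its average through the Green's function $G_h$ of $(\mathbb{S}^2,h)$,
\begin{equation*}
u(t,x)=\frac{1}{\mathrm{Area}_h(X)}\int_X u(t)\,dV_h+\int_X G_h(x,y)\,\bigl(f(t,y)-K_h\bigr)\,dV_h(y).
\end{equation*}
The average is controlled uniformly in $t$ by Richard's $L^1$-bound on $u(t)$ (Lemma $2.2$ in \cite{Richard2}), and the contribution of the constant $K_h$ is a fixed smooth function. Since $G_h(x,y)=-\tfrac{1}{2\pi}\log d_h(x,y)+O(1)$, the upper bound for $u(t,x)$ reduces to a bound, uniform in $x$ and $t$, for the logarithmic potential $\int_X\bigl(-\log d_h(x,y)\bigr)f(t,y)\,dV_h(y)$ of the Gaussian curvature measures $f(t)\,dV_h=K_{g(t)}\,dV_{g(t)}$.

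The heart of the argument is therefore a uniform non-concentration estimate for these curvature measures: I would aim to produce a modulus $\omega$ with $\omega(\rho)\to 0$ as $\rho\to 0$ such that $\int_{B^h_\rho(x)}f(t)\,dV_h\leq\omega(\rho)$ for all $x\in X$ and all $t\in(0,T]$. Fed into the log-potential this bounds it uniformly and finishes the proof; letting $t\to 0$ then gives $u_0\in L^\infty$. To obtain such a modulus I would use the convex-geometric description of the curvature measure of $(X,g(t))$ together with its convergence, as $t\to 0$, to the curvature measure of $(X,d)$: for convex surfaces the curvature of a region equals the area of its spherical (Gauss) image, and the two-sided distance estimates $(d'_t)$ together with Lemma \ref{Hauss cont intrin} should let one transfer a non-concentration property of the limit back to the smooth flow metrics $g(t)$ uniformly in $t$.

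The main obstacle is precisely this uniform non-concentration, and it is exactly the place where non-degeneracy must enter in an essential way. Without a hypothesis ruling out curvature concentration the conclusion is delicate: at a point where the curvature measure carries positive mass the logarithmic potential, and hence $u$, would blow up, so the proof must use non-degeneracy of the embedded convex surface to bound the relevant mass of the limiting curvature measure on small balls and to make that bound uniform along the approximating flow. Establishing this uniform modulus, rather than the (now routine) passage through the Green's representation, is where I expect essentially all of the work to lie.
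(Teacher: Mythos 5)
Your lower bound (monotonicity of $u$ in $t$ from $K_{g(t)}>0$, so $u(t)\geqslant u(T)$) is exactly the paper's observation, and your Green's-function setup is correctly arranged as far as it goes. But the upper-bound program has two genuine gaps. First, a quantitative one: a modulus with $\omega(\rho)\to0$ and $\mu_t(B^h_\rho(x)):=\int_{B^h_\rho(x)}f(t)\,dV_h\leqslant\omega(\rho)$ does \emph{not} bound the logarithmic potential. By the layer-cake identity
\begin{equation*}
\int_{B^h_1(x)}\bigl(-\log d_h(x,y)\bigr)\,d\mu_t(y)=\int_0^1\frac{\mu_t\bigl(B^h_\rho(x)\bigr)}{\rho}\,d\rho,
\end{equation*}
what you need is a Dini modulus, $\int_0^1\omega(\rho)\rho^{-1}\,d\rho<\infty$; for instance $\omega(\rho)=(\log(1/\rho))^{-1}$ tends to zero while the potential diverges. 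Second, and fatally: no such modulus, however weak, can follow from non-degeneracy alone. The boundary of a cube is a non-degenerate embedded closed convex surface whose curvature measure has atoms of mass $\pi/2$ at the eight vertices; since $\Delta_h u(t)=K_h-f(t)$ with $u(t)\to u_0$ in $L^1$ and total mass $\int f(t)\,dV_h=4\pi$ fixed, the measures $f(t)\,dV_h$ converge weakly-$*$ as $t\to0$ to the curvature measure of the initial surface, so a uniform modulus $\omega(\rho)\to0$ would force that limit measure to be atomless --- false for any polyhedron. Thus the estimate where you "expect essentially all of the work to lie" is not provable under the stated hypothesis: the blow-up mechanism you yourself describe (a curvature atom forces the log-potential, hence $u$, to $+\infty$) is precisely what your plan cannot exclude, and your hope of transferring non-concentration from the limit via $(d'_t)$ and Lemma \ref{Hauss cont intrin} is empty because the limit need not have any non-concentration to transfer.

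The paper's proof is entirely different and never sees curvature measures or Green's functions: it establishes a uniform two-sided tensor bound $\frac1C\delta\leqslant g(t)\leqslant C\delta$ against the round metric (Lemma \ref{01007}), after which $u(t)$ is bounded simply because $h=e^{-2u(t_0)}g(t_0)$ is itself comparable to $\delta$. The two ingredients are: (i) Lemma \ref{01001}, where convexity of the approximating surfaces gives $({}^iv_{lj}+{}^iv\,\delta_{lj})\geqslant0$ for the inverse radial function ${}^iv=1/{}^i\rho$, and a maximum-principle argument yields $\max_{\mathbb{S}^2}(|\nabla_\delta\,{}^iv|^2+{}^iv^2)\leqslant\max_{\mathbb{S}^2}{}^iv^2$; non-degeneracy enters only through $1/C\leqslant{}^i\rho\leqslant C$, giving $\frac1C\delta\leqslant\tilde g_i\leqslant C\delta$ uniformly in $i$; and (ii) Lemma \ref{01002}, propagating this along the flow: $R\geqslant-L_1$ is preserved so $g_i(t)\leqslant e^{L_1T}\tilde g_i$, while $dV_{g(t)}\geqslant e^{-L_1T}dV_{g(T)}$ together with the trace--determinant inequality bounds $tr_{g_i(t)}\delta$; one then lets $i\to\infty$. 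Note that this route exploits the \emph{extrinsic} radial-graph structure that the non-degeneracy hypothesis controls, information your purely intrinsic potential-theoretic reduction discards. (One caveat worth recording: your cone-point obstruction is a real phenomenon, and for polyhedral initial data it sits in visible tension with the bounded conclusion; it puts pressure on the passage to the limit in (\ref{01005}), where the fixed-gauge bounds (\ref{01004}) are transferred to the limit flow without discussion of the identifications under which $g_i(t)\to g(t)$. But as a verdict on your proposal: as written, it has a genuine gap at its central estimate that cannot be closed by the mechanism you propose.)
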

 
To prove this lemma, we first prove that the smooth approximating metrics $\tilde{g}_i$ obtained in Lemma \ref{approximation C0} are uniformly equivalent to the standard metric $\delta$ on the unit sphere $\mathbb{S}^2$. Let $(X,g)$ be a smooth embedded closed convex surface in $\mathbb{R}^3$, 
$\rho$ be the radial function of $(X,g)$, and $v=\frac{1}{\rho}$. Then the induced metric on $X$ and the second fundamental form can be written as
\begin{equation}\label{1 2 fundmen}
\begin{split}
&g_{ij}=\rho^2\delta_{ij}+\rho_i\rho_j,\\
& h_{ij}=\frac{1}{\sqrt{\rho^2+|\nabla_{\delta}\rho|^2}} (\rho^2\delta_{ij}+2\rho_i\rho_j-\rho\rho_{ij})\\
& \ \ \ \ =\frac{\rho^3}{\sqrt{\rho^2+|\nabla_{\delta}\rho|^2}} (v_{ij}+v\delta_{ij}),
\end{split}
\end{equation}
where the derivatives are taken with respect to the connection of
 $(\mathbb{S}^2,\delta)$.
\begin{lem}\label{01001}
There is a uniform constant $C$ such that 
\begin{equation}
\frac{1}{C}\delta\leqslant\tilde{g}_i\leqslant C\delta\ \ \ for\ large\ i.
\end{equation} 
\end{lem}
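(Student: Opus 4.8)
The plan is to read everything off the first identity in (\ref{1 2 fundmen}). Parametrizing each smooth convex surface $X_i=\partial K_i$ radially over $\mathbb{S}^2$ by $u\mapsto\rho_i(u)u$, where $\rho_i$ is the radial function about a fixed origin $O\in\mathbb{R}^3$ and covariant derivatives are taken on $(\mathbb{S}^2,\delta)$, the pulled-back metric is $\tilde{g}_{i,jk}=\rho_i^2\delta_{jk}+(\rho_i)_j(\rho_i)_k$. The rank-one term $(\rho_i)_j(\rho_i)_k$ is positive semidefinite with single nonzero eigenvalue $|\nabla_\delta\rho_i|^2$, so this identity already gives the sandwich
\begin{equation*}
\rho_i^2\,\delta\leqslant\tilde{g}_i\leqslant\big(\rho_i^2+|\nabla_\delta\rho_i|^2\big)\,\delta .
\end{equation*}
Hence the lemma reduces to two uniform pointwise estimates for large $i$: a two-sided bound $0<r'\leqslant\rho_i\leqslant R'$ and a gradient bound $|\nabla_\delta\rho_i|\leqslant L$, with $r'$, $R'$, $L$ independent of $i$. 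Once these hold, the choice $C=\max\{(R')^2+L^2,\,(r')^{-2}\}$ finishes the proof.

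First I would fix the origin. Since $(X,d)$ is non-degenerate, the convex body $K$ bounded by $X$ has nonempty interior; placing $O\in\mathrm{int}(K)$ gives $0<r<R$ with $B_r(O)\subset K\subset B_R(O)$. By Theorem $3.4.1$ of \cite{Schneider} one has $d_H(K_i,K)\to0$, so for all large $i$, $d_H(K_i,K)<\min\{r/2,R\}$; a one-line separating-hyperplane argument then yields $B_{r/2}(O)\subset K_i\subset B_{2R}(O)$. In particular $\rho_i$ is well defined, smooth, and satisfies $r':=r/2\leqslant\rho_i\leqslant 2R=:R'$ for large $i$, which is the two-sided bound.

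The substantive step is the uniform gradient bound, obtained geometrically from the inner ball $B_{r'}(O)\subset K_i$. For a boundary point $p=\rho_i u$, the outward unit normal computed from $p(u)=\rho_i(u)u$ is $\hat{\nu}=(\rho_i u-\nabla_\delta\rho_i)/\sqrt{\rho_i^2+|\nabla_\delta\rho_i|^2}$ (using $\nabla_\delta\rho_i\perp u$), so the angle $\theta$ between $u$ and $\hat{\nu}$ satisfies $\cos\theta=\rho_i/\sqrt{\rho_i^2+|\nabla_\delta\rho_i|^2}$. On the other hand, the supporting hyperplane of $K_i$ at $p$ lies on the far side of $B_{r'}(O)$, so its distance $\langle p,\hat{\nu}\rangle=\rho_i\cos\theta$ from $O$ is at least $r'$; hence $\cos\theta\geqslant r'/\rho_i$. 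Combining the two expressions and using $\rho_i\leqslant R'$ gives
\begin{equation*}
|\nabla_\delta\rho_i|^2\leqslant\frac{\rho_i^2\big(\rho_i^2-(r')^2\big)}{(r')^2}\leqslant\frac{(R')^2\big((R')^2-(r')^2\big)}{(r')^2}=:L^2,
\end{equation*}
a bound independent of $i$. Feeding $r'$, $R'$, $L$ into the sandwich above proves the lemma.

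I expect the gradient estimate to be the only real obstacle: the two-sided radial bound is soft convex geometry once $O$ is interior to $K$, whereas controlling $|\nabla_\delta\rho_i|$ requires the genuinely convex input that the fixed inner ball $B_{r'}(O)$ forces the normal of $X_i$ to stay uniformly transverse to the radial direction, i.e. $\cos\theta$ bounded away from $0$. This is exactly where the non-degeneracy hypothesis is essential, since for a degenerate $K$ one cannot keep a ball of fixed radius inside the $K_i$ and the gradient can blow up.
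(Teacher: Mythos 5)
Your proof is correct, and it reaches the same reduction as the paper (uniform two-sided bounds on the radial function plus a uniform gradient bound, with the two-sided bound coming from non-degeneracy and Hausdorff convergence exactly as in your second paragraph), but your route to the gradient bound is genuinely different. The paper argues analytically: setting $^iv=1/\,^i\rho$, it applies a maximum principle to $f=|\nabla_\delta\,^iv|^2+(1+\eta)\,^iv^2$, using convexity through the second line of (\ref{1 2 fundmen}) --- namely $(^iv_{lj}+\,^iv\delta_{lj})\geqslant 0$ --- to force $\nabla_\delta\,^iv=0$ at the maximum of $f$, whence $\max_{\mathbb{S}^2}(|\nabla_\delta\,^iv|^2+\,^iv^2)\leqslant\max_{\mathbb{S}^2}\,^iv^2$ and the gradient bound follows from the two-sided bound on $^i\rho$. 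You instead extract the same bound from elementary convex geometry: the inner ball $B_{r'}(O)\subset K_i$ forces the support function $\langle p,\hat{\nu}\rangle=\rho_i\cos\theta\geqslant r'$, so $\cos\theta=\rho_i/\sqrt{\rho_i^2+|\nabla_\delta\rho_i|^2}$ is uniformly bounded below; both arguments consume convexity and non-degeneracy at the same spots, and your computation checks out (in particular the normal formula $\hat{\nu}=(\rho_i u-\nabla_\delta\rho_i)/\sqrt{\rho_i^2+|\nabla_\delta\rho_i|^2}$ is right since $\nabla_\delta\rho_i\perp u$ and $\langle\rho_i u-\nabla_\delta\rho_i,\rho_{i,j}u+\rho_i\partial_j u\rangle=0$). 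Your approach buys elementarity --- no maximum principle, no $\eta$-regularization, explicit constants --- while the paper's PDE-style estimate only needs $(\,^ih_{lj})\geqslant 0$ and no choice of supporting hyperplanes. One further point in your favor: your eigenvalue sandwich $\rho_i^2\delta\leqslant\tilde{g}_i\leqslant(\rho_i^2+|\nabla_\delta\rho_i|^2)\delta$ concludes more cleanly than the paper's final step, which passes through two-sided bounds on $tr_\delta\tilde{g}_i$; a lower bound on the trace alone does not imply a lower bound on the metric, and the paper implicitly relies on $\tilde{g}_i\geqslant\,^i\rho^2\delta$, which is exactly the inequality you state explicitly.
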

\begin{proof}
 Let $^i\rho$ be the radial function of $(X_i,\tilde{g}_i)$ and $^iv=\frac{1}{^i\rho}$. We claim that 
\begin{equation}
\max_{\mathbb{S}^2}(|\nabla_\delta\ ^iv|^2+\ ^iv^2)\leqslant \max_{\mathbb{S}^2}\ ^iv^2.
\end{equation}  
Define $f=|\nabla_\delta\ ^iv|^2+(1+\eta)\ ^iv^2$ for $\eta>0$. At the maximum point of $f$, we have
\begin{equation}
0=\nabla_{\delta l} f=\nabla_{\delta l}(|\nabla_\delta\ ^iv|^2+(1+\eta)\ ^iv^2)=2\ ^iv_j(\ ^iv_{lj}+(1+\eta)\ ^iv\delta_{lj}).
\end{equation}  
Since $(X_i,\tilde{g}_i)$ is convex, $(\ ^ih_{lj})\geqslant0$ and then $(^iv_{lj}+\ ^iv\delta_{lj})\geqslant0$. Then we have $(^iv_{lj}+(1+\eta)\ ^iv\delta_{lj})>0$. Hence $\nabla_\delta v=0$ at the maximum point of $f$, which implies 
\begin{equation}
\max_{\mathbb{S}^2}(|\nabla_\delta\ ^iv|^2+(1+\eta)\ ^iv^2)\leqslant (1+\eta)\max_{\mathbb{S}^2}\ ^iv^2.
\end{equation} 
Let $\eta\to0$, we complete the proof of the claim.

Since $(X,d)$ is non-degenerate and $(X_i,\tilde{g}_i)$ converges to $(X,d)$ in Hausdorff sense, there exists a constant $C$ such that for large $i$, 
\begin{equation}
\frac{1}{C}\leqslant\ ^i\rho\leqslant C.
\end{equation}  
From the claim, $|\nabla_\delta\ ^i\rho|^2$ are also uniformly bounded for large $i$. Taking trace with respect to $\delta$ on both sides of $(\tilde{g}_i)_{lj}=\ ^i\rho^2\delta_{lj}+\ ^i\rho_l\ ^i\rho_j$, we conclude that there exists uniform constant $C$ such that
\begin{equation}
tr_\delta \tilde{g}_i\leqslant C\ \ \ and\ \ \ tr_\delta \tilde{g}_i\geqslant\frac{1}{C}>0\ \ for\ large\ i.
\end{equation} 
Therefore, there is a uniform constant $C$ such that 
\begin{equation}
\frac{1}{C}\delta\leqslant\tilde{g}_i\leqslant C\delta
\end{equation} 
for large $i$.
\end{proof}

\medskip

Next, we extend this equivalence to Ricci flow.
\begin{lem}\label{01002}
Let $(X,g(t))_{t\in[0,T]}$ be a 2-dimensional smooth Ricci flow with initial metric $g_0$, then we have
\begin{equation}\label{01003}
g(t)\leqslant e^{L_1T}g_0\ \ \ and\ \ \ tr_{g(t)}g_0\leqslant L_2(\frac{dV_{g_0}}{dV_{g(T)}})^2,
\end{equation}  
where $-L_1$ is the lower bound of $R_{g_0}$, and $L_2$ depends on $L_1$, $T$.
\end{lem}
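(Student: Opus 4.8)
The plan is to exploit the fact that in dimension two the Ricci flow is purely conformal, so that both estimates in (\ref{01003}) reduce to pointwise bounds on a single scalar. Writing $\dd g(t)=-R_{g(t)}g(t)$ as in (\ref{evo g 2}), I would put $g(t)=\omega(t,x)g_0$ with $\omega(0,x)\equiv 1$; then $\dd\omega=-R_{g(t)}\omega$, so that $\omega(t,x)=\exp\big(-\int_0^t R_{g(s)}(x)\,ds\big)$. Because $n=2$, one has the elementary identities $g(t)=\omega g_0$, $tr_{g(t)}g_0=2\omega^{-1}$ and $dV_{g(t)}=\omega\,dV_{g_0}$, the last of which gives $dV_{g_0}/dV_{g(T)}=\omega(T)^{-1}$. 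These turn (\ref{01003}) into statements about $\int_0^t R_{g(s)}\,ds$ alone.

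The one input needed from the flow is a lower scalar curvature bound. Since $R=2K$ in two dimensions, the evolution equation for $K$ recorded above becomes $\dd R_{g(t)}=\Delta_{g(t)}R_{g(t)}+R_{g(t)}^2$; as the reaction term is nonnegative, the scalar maximum principle shows that $\min_X R_{g(t)}$ is nondecreasing, whence $R_{g(t)}\geqslant\min_X R_{g_0}\geqslant -L_1$ for all $t\in[0,T]$. After possibly enlarging $L_1$ I may assume $L_1\geqslant 0$.

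For the first inequality this bound gives $-\int_0^t R_{g(s)}\,ds\leqslant L_1 t\leqslant L_1 T$, hence $\omega(t,x)\leqslant e^{L_1 T}$ and $g(t)=\omega g_0\leqslant e^{L_1 T}g_0$. For the second, the identities above rewrite $tr_{g(t)}g_0\leqslant L_2\big(dV_{g_0}/dV_{g(T)}\big)^2$ as $2\,e^{\int_0^t R_{g(s)}ds}\leqslant L_2\,e^{2\int_0^T R_{g(s)}ds}$, i.e. as the pointwise bound $\int_0^t R_{g(s)}ds-2\int_0^T R_{g(s)}ds\leqslant\log(L_2/2)$. Rewriting the left-hand side as $-\int_0^t R_{g(s)}ds-2\int_t^T R_{g(s)}ds$ and applying $R_{g(s)}\geqslant -L_1$ on each interval bounds it above by $L_1 t+2L_1(T-t)\leqslant 2L_1 T$, so that $L_2=2e^{2L_1 T}$ works and depends only on $L_1$ and $T$.

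The point worth stressing --- and the only place where any care is required --- is that no \emph{upper} bound on $R_{g(t)}$ is available: under the flow the curvature may concentrate and $\int_0^T R_{g(s)}\,ds$ may be large, so $tr_{g(t)}g_0$ cannot be dominated by an absolute constant. What rescues the estimate is that precisely this large integral reappears, but with the larger coefficient $2$, inside the volume-density ratio on the right; only the cheap lower curvature bound is then needed to close the inequality. Thus the only real (and mild) obstacle is the bookkeeping that lets the square in $\big(dV_{g_0}/dV_{g(T)}\big)^2$ absorb the single power of $\omega^{-1}$ in $tr_{g(t)}g_0$.
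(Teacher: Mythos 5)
Your proof is correct, and for the second estimate it takes a genuinely different (and slightly sharper) route than the paper. The first inequality is obtained identically in both arguments: the maximum principle applied to $\frac{\partial}{\partial t}R_{g(t)}=\Delta_{g(t)} R_{g(t)}+2|Ric_{g(t)}|^2_{g(t)}$ (which in dimension two is your $\Delta R+R^2$) gives $R_{g(t)}\geqslant -L_1$, whence $\frac{\partial}{\partial t}g(t)\leqslant L_1 g(t)$. For the trace bound, however, the paper does not integrate the conformal ODE; it invokes the pointwise linear-algebra inequality $tr_{g_2}g_1\leqslant n\big(\frac{\det g_1}{\det g_2}\big)(tr_{g_1}g_2)^{n-1}$ to get $tr_{g(t)}g_0\leqslant 2\big(\frac{dV_{g_0}}{dV_{g(t)}}\big)^2 tr_{g_0}g(t)$, bounds $tr_{g_0}g(t)\leqslant 2e^{L_1T}$ by the first estimate, and replaces $dV_{g(t)}$ by $dV_{g(T)}$ via the monotonicity of $e^{-L_1t}dV_{g(t)}$ (from $\frac{\partial}{\partial t}dV_{g(t)}=-R\,dV_{g(t)}\leqslant L_1\,dV_{g(t)}$), arriving at $L_2=4e^{3L_1T}$. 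Your version, writing $g(t)=e^{-\int_0^t R_{g(s)}ds}g_0$ and reducing everything to the scalar inequality $-\int_0^t R-2\int_t^T R\leqslant 2L_1T$, is in substance the same mechanism (the determinant and volume-form manipulations of the paper are exactly the conformal-factor bookkeeping in disguise, and both proofs consume only the lower bound $R\geqslant -L_1$), but it is more transparent in dimension two, yields the better constant $L_2=2e^{2L_1T}$, and makes visible precisely why the squared density ratio absorbs the uncontrolled upper behavior of the curvature. One small point in your favor: your explicit normalization $L_1\geqslant 0$ is needed to pass from $g(t)\leqslant e^{L_1t}g_0$ to $g(t)\leqslant e^{L_1T}g_0$ and to compare $e^{-L_1(T-t)}$ with $e^{-L_1T}$; the paper leaves this implicit (it is harmless there, since in its application the initial surfaces are convex and $L_1=0$). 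The trade-off is that the paper's trace--determinant inequality is stated for arbitrary $n$ and arbitrary pairs of metrics, so its argument would survive outside the conformal two-dimensional setting, whereas yours is tied to $n=2$ --- which is all the lemma claims.
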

\begin{proof}
Applying maximum principle to the evolution equation of the scalar curvature $R_{g(t)}$
\begin{equation}
\frac{\partial}{\partial t}R_{g(t)}=\Delta_{g(t)}R_{g(t)}+2|Ric_{g(t)}|_{g(t)}^2,
\end{equation}  
we get
\begin{equation}
R_{g(t)}\geqslant R_{g_0}\geqslant-L_1.
\end{equation}
In dimension $2$, Ricci flow can be written as
\begin{equation}
\frac{\partial}{\partial t}g(t)=-R_{g(t)}g(t)\leqslant L_1g(t).
\end{equation} 
Hence we have $g(t)\leqslant e^{L_1t}g_0$ and get the first estimate in $(\ref{01003})$.

For the second estimates, we need the following inequality.
\begin{equation}
n\Big(\frac{det g_1}{det g_2}\Big)^{\frac{1}{n}}\leqslant tr_{g_2}g_1\leqslant n\Big( \frac{det g_1}{det g_2}\Big)(tr_{g_1}g_2)^{n-1},
\end{equation}
where $g_1$ and $g_2$ are any two smooth $n$-dimensional metrics. In our case,
\begin{equation}
tr_{g(t)}g_0\leqslant 2\Big( \frac{det g_0}{det g(t)}\Big)(tr_{g_0}g(t))=2\Big( \frac{dV_{g_0}}{dV_{g(t)}}\Big)^2(tr_{g_0}g(t)).
\end{equation}
Hence we only need to prove that $dV_{g(t)}$ is bounded from below uniformly. The volume form evolves as
\begin{equation}
\frac{\partial}{\partial t}dV_{g(t)}=-RdV_{g(t)}\leqslant L_1dV_{g(t)},
\end{equation}  
which implies that $e^{-L_1t}dV_{g(t)}$ decrease  and then $dV_{g(t)}\geqslant e^{-L_1(T-t)}dV_{g(T)}\geqslant e^{-L_1T}dV_{g(T)}$. So we have
\begin{equation}
tr_{g(t)}g_0\leqslant 4\Big( \frac{dV_{g_0}}{e^{-L_1T}dV_{g(T)}}\Big)^2e^{L_1T}.
\end{equation}  
Let $L_2=4e^{3L_1T}$, we complete this Lemma.
\end{proof}

\medskip

For the sequence of smooth Ricci flow $(X_i, g_i(t))_{t\in[0,T]}$ with $\tilde{g}_i$ as initial condition, we have by Lemma \ref{01001} and Lemma \ref{01002}, that
\begin{equation}\label{01004}
g_i(t)\leqslant C\delta\ \ \ and\ \ \ tr_{g_i(t)}\delta\leqslant 4C^3(\frac{dV_\delta}{dV_{g_i(T)}})^2,
\end{equation} 
where $C$ is the constant in Lemma \ref{01001}.  Letting $i\to\infty$ gives 
\begin{equation}\label{01005}
g(t)\leqslant C\delta\ \ \ and\ \ \ tr_{g(t)}\delta\leqslant 4C^3(\frac{dV_\delta}{dV_{g(T)}})^2 \ \ \ for\ t\in(0,T],
\end{equation} 
which is equivalent to
\begin{equation}\label{01006}
\frac{1}{A}\delta\leqslant g(t)\leqslant C\delta\ \ \ for\ t\in(0,T],
\end{equation} 
where $A$ depends on $C$ and $T$. In fact, we proved the following Lemma.
\begin{lem}\label{01007}
Assume that $(X,d)$ is a non-degenerate embedded closed convex surface in $\mathbb{R}^3$. Let $(X,g(t))_{t\in(0,T]}$ be a Ricci flow admitting $(X,d)$ as a metric initial condition. Then there exists a constant $C$ such that 
\begin{equation}\label{01008}
\frac{1}{C}\delta\leqslant g(t)\leqslant C\delta\ \ \ for\ t\in(0,T].
\end{equation}
\end{lem}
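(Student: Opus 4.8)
The plan is to prove the two-sided bound first for the smooth approximating Ricci flows $(X_i,g_i(t))_{t\in[0,T]}$ with $g_i(0)=\tilde{g}_i$, with constants that do not depend on $i$, and then to pass to the limit $i\to\infty$ using the smooth convergence $g_i(t)\to g(t)$ already established in the existence proof. The two inputs are Lemma \ref{01001}, which gives $\frac{1}{C}\delta\leqslant\tilde{g}_i\leqslant C\delta$ uniformly in $i$, and Lemma \ref{01002}, which controls the evolving metric in terms of its initial data along a $2$-dimensional Ricci flow.

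For the upper bound I would apply the first inequality of Lemma \ref{01002} with $g_0=\tilde{g}_i$. Since each $(X_i,\tilde{g}_i)$ is convex, its Gauss curvature is non-negative, so $R_{\tilde{g}_i}=2K_{\tilde{g}_i}\geqslant 0$ and one may take $L_1=0$. This yields $g_i(t)\leqslant\tilde{g}_i$, and combining with Lemma \ref{01001} gives $g_i(t)\leqslant C\delta$ uniformly in $i$ and $t$. Letting $i\to\infty$ produces $g(t)\leqslant C\delta$ for $t\in(0,T]$.

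For the lower bound I would use the second (trace) inequality of Lemma \ref{01002} and replace the $\tilde{g}_i$-quantities by $\delta$-quantities via Lemma \ref{01001} (together with $L_1=0$, so $L_2=4$), obtaining $tr_{g_i(t)}\delta\leqslant 4C^3\left(\frac{dV_\delta}{dV_{g_i(T)}}\right)^2$. Writing the eigenvalues of $g_i(t)$ with respect to $\delta$ as $0<\lambda_1\leqslant\lambda_2$, one has $tr_{g_i(t)}\delta=\frac{1}{\lambda_1}+\frac{1}{\lambda_2}$, so the trace bound alone forces $\frac{1}{\lambda_1}$ bounded above, i.e. a positive lower bound $g_i(t)\geqslant\frac{1}{A}\delta$ with $A$ depending only on $C$, $T$ and the volume form at time $T$. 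Passing to the limit, together with the upper bound, then yields $(\ref{01008})$ for $t\in(0,T]$.

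The hard part is securing the uniformity of the constants in $i$, and the crux is the factor $\frac{dV_\delta}{dV_{g_i(T)}}$ appearing in the trace bound: this requires a positive lower bound on the volume form $dV_{g_i(T)}$ at the fixed positive time $T$, uniform in $i$. I would obtain this from the smooth convergence $g_i(T)\to g(T)$, noting that $g(T)$ is a genuine smooth positive-definite metric on the compact surface $X$, so its volume form is bounded below by a positive constant and hence $dV_{g_i(T)}$ is uniformly bounded below for large $i$. Note also that the right-hand side of the trace bound involves only the fixed time $T$ and not $t$, so the resulting lower bound holds uniformly over all of $(0,T]$; this uniformity is precisely what will be needed to extract a bounded conformal potential as $t\to 0$ in Lemma \ref{init u bounded}.
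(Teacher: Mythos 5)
Your proposal is correct and follows essentially the same route as the paper: the paper likewise applies Lemma \ref{01002} to the approximating flows with $L_1=0$ (convexity of $\tilde{g}_i$ giving $R_{\tilde{g}_i}\geqslant 0$), converts via Lemma \ref{01001} to the uniform bounds $g_i(t)\leqslant C\delta$ and $tr_{g_i(t)}\delta\leqslant 4C^3\bigl(\frac{dV_\delta}{dV_{g_i(T)}}\bigr)^2$, and lets $i\to\infty$. The only cosmetic difference is that you bound $dV_{g_i(T)}$ below uniformly in $i$ before passing to the limit, while the paper passes the inequality itself to the limit and then uses that $dV_{g(T)}$ is the volume form of the fixed smooth metric $g(T)$; these are the same observation.
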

Now Lemma \ref{init u bounded} follows immediately.

\medskip

{\it Proof of Lemma \ref{init u bounded}.}\ \ Since $g(t)=e^{2u(t)}h(x)$ is uniform equivalent to $\delta$ for $t\in(0,T]$, $u(t)$ is uniformly bounded. Since $u(t)$ increases to $u_0$ as $t$ decreases to $0$, $u_0$ is also bounded. \QEDB

\medskip

Next, for an $L^1$-function $u$ and a smooth Riemannian metric $h$ on $M$, there is a metric $d_{h,u}$ defined as
\begin{equation}\label{01009}
d_{h,u}(x,y)=\inf_{\gamma\in\Gamma(x,y)}\int_0^1e^{u(\gamma(\tau))}|\dot{\gamma}(\tau)|_hd\tau,
\end{equation}  
where $\Gamma(x,y)$ is the space of $C^1$ paths $\gamma$ from $[0,1]$ to $M$ with $\gamma(0)=x$ and $\gamma(1)=y$. This metric was studied by Reshetnyak \cite{RESHE}. For more details, please see the appendix of \cite{Richard2}.

In \cite{Richard2}, when $(X,d)$ is a compact Alexandrov surface with curvature bounded from below, the metric $d_{g(t)}$ induced by $g(t)$ (here, $g(t)$ is a Ricci flow on $M$ with metric initial condition $(X,d)$ in the sense of Theorem \ref{richard}) converges to $d_{h,u_0}$ uniformly, where $u_0$ is the $L^1$-limit of the conformal potential $u(t)$ along Ricci flow $(M,g(t))_{t\in(0,T]}$ as $t\to0$ in \cite{Richard2}. But $d$ may not be $d_{h,u_0}$ there. In fact, we  can only conclude that $(M,d_{h,u_0})$ is isometric to $(X,d)$ from the Lemma $2.4$ in \cite{Richard2} . In our case when $(X,d)$ is a non-degenerate embedded closed convex surface in $\mathbb{R}^3$, we prove that indeed $d=d_{h,u_0}$.

\begin{thm}\label{01012}
Assume that $(X,d)$ is a non-degenerate embedded closed convex surface in $\mathbb{R}^3$. Let $(X,g(t))_{t\in(0,T]}$ be the Ricci flow admitting $(X,d)$ as a metric initial condition and $u(t)$ be the conformal potential along $(X,g(t))_{t\in(0,T]}$. Then 
\begin{equation}\label{0415}
d=d_{h,u_0},
\end{equation}
where $u_0(x)$ is the pointwise limit of $u(t,x)$ as $t\to0$. 
\end{thm}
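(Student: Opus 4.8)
The plan is to derive $d=d_{h,u_0}$ by comparing two uniform limits of the single family $d_{g(t)}$: the limit $d_{g(t)}\to d$ given by Theorem \ref{Existence}, and the limit $d_{g(t)}\to d_{h,u_0}$ supplied by Reshetnyak's theory as developed in \cite{RESHE,Richard2}. Because both convergences are uniform and, by the construction of Lemma \ref{approximation C0}, the flow of Theorem \ref{Existence} lives on $X$ itself, uniqueness of uniform limits will force $d=d_{h,u_0}$. Two preliminary remarks set this up. First, since $g(t)=e^{2u(t)}h$ is a smooth conformal metric, its Riemannian distance coincides with the Reshetnyak length metric of its potential, $d_{g(t)}=d_{h,u(t)}$ with $d_{h,u(t)}$ as in (\ref{01009}). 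Second, the $u_0$ appearing in (\ref{0415}) as the pointwise limit of $u(t,x)$ is the same function as the $L^1$-limit used in \cite{Richard2}: positivity of $K_{g(t)}$ makes $u(t)$ increase as $t\downarrow0$ and Lemma \ref{init u bounded} bounds it uniformly, so monotone convergence yields $u(t)\to u_0$ simultaneously pointwise and in $L^1$.

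One inequality in (\ref{0415}) is elementary: since $u(t)\leqslant u_0$ for every $t\in(0,T]$, comparing the integrands in (\ref{01009}) along an arbitrary path gives $d_{g(t)}=d_{h,u(t)}\leqslant d_{h,u_0}$, and letting $t\to0$ with Theorem \ref{Existence} yields $d\leqslant d_{h,u_0}$. The whole difficulty lies in the reverse inequality, which, rewritten as $d_{h,u_0}\leqslant\lim_{t\to0}d_{h,u(t)}$, is exactly the assertion that the length metrics $d_{h,u(t)}$ converge up to $d_{h,u_0}$ and not to a strictly smaller limit — the lower semicontinuity of the Reshetnyak length functional under the monotone, uniformly bounded convergence $u(t)\nearrow u_0$. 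This is where the estimates assembled above are consumed: the two-sided bounds on $u(t)$ (Lemma \ref{init u bounded} and Lemma \ref{01007}), the $L^1$-convergence $u(t)\to u_0$, and the uniform bound $\int_X K_{g(t)}\,dV_{g(t)}=4\pi$ on the nonnegative total curvature coming from Gauss-Bonnet. Under exactly these hypotheses the Reshetnyak convergence established in \cite{Richard2} gives $d_{h,u(t)}\to d_{h,u_0}$ uniformly as $t\to0$. I expect this to be the main obstacle: because $u_0\geqslant u(t)$ pointwise one cannot compare lengths along a fixed curve, and must instead extract a uniform limit of the $g(t)$-geodesics — whose $h$-lengths are uniformly bounded since $u(t)$ is bounded below — while ruling out loss of length through concentration of $u_0-u(t)$ near the points where curvature accumulates, which is precisely what the bounded-total-curvature hypothesis prevents.

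Combining the two convergences finishes the proof: on $X$ we have $d_{g(t)}\to d$ uniformly by Theorem \ref{Existence} and $d_{g(t)}=d_{h,u(t)}\to d_{h,u_0}$ uniformly by the preceding paragraph, so $d=d_{h,u_0}$, which is (\ref{0415}). I would stress that the gain over \cite{Richard2} is structural rather than analytic: there the flow sits on an abstract manifold $M$ tied to $(X,d)$ only through Gromov-Hausdorff convergence, so one recovers only that $(M,d_{h,u_0})$ is isometric to $(X,d)$, whereas Lemma \ref{approximation C0} places our flow directly on $X$ with $d_{g(t)}\to d$, turning that isometry into the genuine equality $d=d_{h,u_0}$.
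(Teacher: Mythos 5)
Your proposal is correct and takes essentially the same route as the paper: identify $d_{g(t)}=d_{h,u(t)}$ via the conformal potential, invoke the Reshetnyak convergence of Lemma $2.4$ in \cite{Richard2} for $d_{h,u(t)}\to d_{h,u_0}$ uniformly, and combine this with the uniform convergence $d_{g(t)}\to d$ from Theorem \ref{Existence} to conclude $d=d_{h,u_0}$. Your split into an elementary monotonicity inequality $d\leqslant d_{h,u_0}$ plus a lower-semicontinuity direction is commentary on what the cited lemma provides rather than a genuinely different argument, so the two proofs coincide in substance.
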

\begin{proof}
By definition $(\ref{01009})$,  and the definition of confromal potential $g(t)=e^{2u(t,x)}h(x)$, we have
\begin{equation}\label{01011}
\begin{split}
d_{h,u(t)}(x,y)&=\inf_{\gamma\in\Gamma(x,y)}\int_0^1e^{u(t,\gamma(\tau))}|\dot{\gamma}(\tau)|_hd\tau\\
&=\inf_{\gamma\in\Gamma(x,y)}\int_0^1|\dot{\gamma}(\tau)|_{g_t(\tau)}d\tau=d_{g(t)}(x,y).
\end{split}
\end{equation}  
By Lemma $2.4$ in \cite{Richard2}, $d_{h,u(t)}$ converges to $d_{h,u_0}$ uniformly as $t\to0$. Since we have proved that $d_{g(t)}$ converges to $d$ uniformly as $t\to0$ on $X$, then $(\ref{0415})$ follows by letting $t\to0$ on both sides of $(\ref{01011})$.
\end{proof}

\medskip

We now prove a regularity theorem for the isometry between the metric spaces $(X_1, e^{2u_1}h_1)$ and $(X_2, e^{2u_2}h_2)$, where $u_1$ and $u_2$ are two bounded functions, and $h_1$ and $h_2$ are pull back metrics of  two metrics on $\mathbb{S}^2$ with constant Gaussian curvature.
\begin{thm}\label{01010}
Assume that $F:(X_1, e^{2u_1}h_1)\rightarrow(X_2, e^{2u_2}h_2)$ is an isometry.  Then $F$ is differentiable, where $u_1$ and $u_2$ are two bounded functions, and $h_1$ and $h_2$ are two pull back metrics of the metrics on $\mathbb{S}^2$ with constant Gaussian curvature.
\end{thm}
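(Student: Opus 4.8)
The plan is to show that $F$, read in the smooth conformal structures underlying the two metrics, is a conformal (holomorphic) or anticonformal map, and then to invoke elliptic regularity for such maps to deduce smoothness, hence differentiability.

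\emph{Step 1 (Reduction to bounded conformal factors and bi-Lipschitz equivalence).} Since $h_1$ and $h_2$ are smooth metrics of constant curvature on $\mathbb{S}^2$, they determine genuine Riemann surface structures; choosing isothermal coordinates I can write $h_i = e^{2\phi_i}|dz|^2$ locally with $\phi_i$ smooth, so that $e^{2u_i}h_i = e^{2w_i}|dz|^2$ with $w_i = u_i + \phi_i$ bounded (using the boundedness from Lemma \ref{init u bounded}). Because $u_i$ is bounded, $e^{2u_i}$ is bounded above and below by positive constants, so the length metrics $d_{h_i,u_i}$ are bi-Lipschitz equivalent to the smooth distances $d_{h_i}$. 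Consequently $F$ is a bi-Lipschitz homeomorphism between $(X_1,d_{h_1})$ and $(X_2,d_{h_2})$; in the smooth charts it lies in $W^{1,\infty}_{loc}\subset W^{1,2}_{loc}$ and, by Rademacher's theorem, is differentiable almost everywhere.

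\emph{Step 2 (Pointwise conformality of the differential).} At almost every point $z_0$ the map $F$ is differentiable, the factor $w_1$ is approximately continuous at $z_0$, and $w_2$ is approximately continuous at $F(z_0)$ (using that the bi-Lipschitz map $F$ carries null sets to null sets). Comparing the $d_{h_1,u_1}$-length of a short segment issuing from $z_0$ in a direction $e^{i\theta}$ with the $d_{h_2,u_2}$-length of its image, and using that $F$ is an isometry of the length metrics, I expect to obtain
\begin{equation*}
e^{w_1(z_0)}=e^{w_2(F(z_0))}\,|dF_{z_0}(e^{i\theta})|\qquad\text{for all }\theta .
\end{equation*}
Thus $|dF_{z_0}(e^{i\theta})|$ is independent of $\theta$, which forces the linear map $dF_{z_0}$ to be conformal, i.e. a rotation (or reflection) composed with a dilation. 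In complex notation this says the Beltrami coefficient $F_{\bar z}/F_z$ vanishes at $z_0$, so $F_{\bar z}=0$ (in the orientation-preserving case; otherwise $F_z=0$) almost everywhere.

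\emph{Step 3 (Regularity).} Having shown that $F\in W^{1,2}_{loc}$ is a homeomorphism with vanishing Beltrami coefficient almost everywhere, $F$ is $1$-quasiconformal, and by Weyl's lemma for quasiconformal maps it is genuinely holomorphic (or antiholomorphic) in the isothermal coordinates. A holomorphic homeomorphism between Riemann surfaces has nowhere-vanishing derivative, hence is a local biholomorphism and in particular $C^\infty$; therefore $F$ is differentiable, as claimed.

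\emph{Main obstacle.} The delicate point is Step 2: because $w_1,w_2$ are merely bounded and measurable, one cannot read off the infinitesimal length element as $e^{w(z_0)}|dz|$ at every point, and the length comparison must be justified in an averaged, almost-everywhere sense. Making the passage from the global isometry of the length metrics $d_{h_i,u_i}$ to the pointwise statement that $dF_{z_0}$ sends $h_1$-circles to $h_2$-circles rigorous is where the analytic work—metric differentiability, approximate continuity, and the behaviour of curve length under Lipschitz maps—is concentrated.
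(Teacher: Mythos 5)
Your proposal is correct in outline, and its first two steps coincide with the paper's own proof: there too, $F$ is observed to be bi-Lipschitz (hence differentiable a.e.\ by Rademacher), one passes to isothermal coordinates $h_1=\lambda_1(du^2+dv^2)$, $h_2=\lambda_2(dx^2+dy^2)$, and at points of differentiability the pull-back identity $e^{2u_1}h_1=F^*(e^{2u_2}h_2)$ yields $x_ux_v+y_uy_v=0$ and $x_u^2+y_u^2=x_v^2+y_v^2$ --- exactly your statement that $dF$ is a.e.\ conformal (vanishing Beltrami coefficient, up to orientation). Where you diverge is the endgame: the paper concludes by invoking the Looman--Menchoff-type analytic extension theorem of \cite{Arsove}, whereas you upgrade regularity via quasiconformal theory (a $W^{1,2}_{loc}$ homeomorphism with vanishing Beltrami coefficient a.e.\ is $1$-quasiconformal, hence conformal by Weyl's lemma, hence a smooth local biholomorphism). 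Your route is standard and sound, and it is arguably better matched to the available hypotheses: Looman--Menchoff-type theorems require the partial derivatives to exist everywhere (or off a small exceptional set), which Rademacher alone does not furnish, while the quasiconformal criterion needs only Sobolev regularity plus the a.e.\ dilatation bound --- precisely what a bi-Lipschitz map with a.e.\ conformal differential provides. Two points to tie up: (i) you should note that a Sobolev (indeed bi-Lipschitz) homeomorphism of a connected surface has Jacobian of a.e.\ constant sign, so the holomorphic/antiholomorphic dichotomy is global and you may reduce to the orientation-preserving case by composing with a conjugation; (ii) the ``main obstacle'' you flag in Step 2 --- passing from the isometry of the length metrics $d_{h_i,u_i}$ with merely bounded $w_i$ to the pointwise identity $e^{w_1(z_0)}=e^{w_2(F(z_0))}\,|dF_{z_0}(e^{i\theta})|$ --- is a genuine issue, but the paper's proof makes exactly the same move (asserting $e^{2u_1}h_1=F^*(e^{2u_2}h_2)$ at differentiable points) with no further justification, so your explicitly stated program of approximate continuity, metric differentiability and preservation of null sets is, if anything, more careful than the original.
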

\begin{proof}
Since $F$ is an isometry, $F$ is bi-Lipschitz and then it is differentiable almost everywhere. Write locally $h_1=\lambda_1(du^2+dv^2)$ and $h_2=\lambda_2(dx^2+dy^2)$ for some positive functions $\lambda_1$ and $\lambda_2$. At differentiable point of $F$, we have
\begin{equation*}
\begin{split}
e^{2u_1}\lambda_1(du^2+dv^2)&=e^{2u_1}h_1=F^*(e^{2u_2}h_2)=e^{2u_2\circ F}(\lambda_2\circ F) F^*h_2\\
&= e^{2u_2\circ F}(\lambda_2\circ F)\big((x_udu+x_vdv)^2+(y_udu+y_vdv)^2\big)\\
&= e^{2u_2\circ F}(\lambda_2\circ F)\big((x^2_u+y^2_u)du^2+(x^2_v+y^2_v)dv^2+2(x_ux_v+y_uy_v)dudv\big)
\end{split}
\end{equation*}  
Since $u_1$ and $u_2$ are bounded functions, we have
\begin{equation}
x_ux_v+y_uy_v=0\ \ \ and\ \ \ x_u^2+y_u^2=x_v^2+y_v^2,
\end{equation}  
which is equivalent to the fact that %$(x_u,y_u)=e_1\bot e_2=(x_v,y_v)$ with $\|e_1\|=\|e_2\|\neq0$. Then there must be
\begin{equation}
x_u=-y_v\ and\ x_v=y_u\ \ \ or\ \ \ x_u=y_v\ and\ x_v=-y_u.
\end{equation} 
Then we know that $F$ is differentiable everywhere  by using the analytic extension theorem in \cite{Arsove}.
\end{proof}

\medskip

Since $u(t)$ increases to $u_0$ and both of them are bounded, by using $(\ref{01011})$ and Lebesgue's monotone convergence theorem, we conclude that $d=d_{h,u_0}$ is the metric induced by $e^{2u_0}h$.

\medskip   

{\it Proof of Theorem \ref{uni1}.} \ \ By Theorem \ref{01012}, $d_i=d_{h_i,u_i}$ $(i=1,2)$ is the metric induced by $e^{2u_i}h_i$, where $u_i=\lim\limits_{t\to 0}u_i(t)$, $u_i(t)$ is the conformal potential along $(X_i,g_i(t))_{t\in(0,T]}$ and $h_i$ is the pull back metric of a metric on $\mathbb{S}^2$ with constant Gaussian curvature. 

From Theorem \ref{01010}, the isometry $f$ is differentiable. Then $(X_1,f^*g_2(t))_{t\in(0,T)}$ is also a Ricci flow admitting $(X_1,d_1)$ as metric initial condition in the sense that the  distance fucntion induced by $f^*g_2(t)$ converges uniformly to $d_1$ as $t\to0$. By using Proposition $0.6$ in \cite{Richard2}, we have $g_1(t)=f^*g_2(t)$.\QEDB

\section{Further discussions}\label{further discussions}

In this section, we introduce our project which aims to study Pogorelov's uniqueness theorem by Ricci flow. 

Pogorelov's uniqueness theorem  \cite{Pogo extr} states that any two closed isometric convex surfaces with induced metrics in $\mathbb{R}^3$ are congruent. It is a generalization of the classical Cohn-Vesson's rigidity theorem \cite{CohnV}.

Our basic idea to study this theorem is to use Ricci flow to construct two families of smooth convex surfaces approximating the two isometric closed convex surfaces in Pogorelov's theorem. We then apply Cohn-Vesson's rigidity theorem to the smooth surfaces and take a limit as $t\to0$ to see if ``the limit of Cohn-Vesson's rigidity theorem will imply Pogorelov's rigid theorem". More precisely, given two isometric embedded closed convex surfaces $(X_1, d_1)$ and $(X_2, d_2)$ in $\mathbb{R}^3$, Theorem \ref{Existence} implies that there are two Ricci flows $(X_1,g_1(t))_{t\in(0,T)}$ and $(X_2,g_2(t))_{t\in(0,T)}$ admitting $(X_1, d_1)$ and $(X_2, d_2)$ as metric initial conditions. Then for every positive time $t$, we embed the Ricci flow $(X_1,g_1(t))$ and $(X_2,g_2(t))$ smoothly and isometrically into $\mathbb{R}^3$ as $(X^t_1,G_1(t))$ and $(X^t_2,G_2(t))$ respectively. The validity for these embeddings is due to the fact that $(X_1,g_1(t))$ and $(X_2,g_2(t))$ are smooth strictly convex surfaces and the solvability of Weyl's problem proved by Nirenberg \cite{Nirenberg}. By Theorem \ref{uni1}, $(X^t_1,G_1(t))$ and $(X^t_2,G_2(t))$ are isometric. Then Cohn-Vesson's rigidity theorem implies that there is a congruence $F(t)\in O(3)$ between them.  We hope to investigate the limits of $(X^t_1,G_1(t))$ and $(X^t_2,G_2(t))$ as $t\to0$. If $(X^t_1,G_1(t))$ and $(X^t_2,G_2(t))$ converge to $(X_1, d_1)$ and $(X_2, d_2)$ in Hausdorff distance up to an isometry in $O(3)$ respectively, the compactness of $O(3)$ will imply the congruence between $(X_1, d_1)$ and $(X_2, d_2)$.

\end{document}